\newif\ifprivate
\renewcommand{\gitMark}{\jobname\,\textbullet{}\,\gitFirstTagDescribe\,\textbullet{}\,\gitAuthorName,\,\gitAuthorIsoDate}
\newcommand{\TODO}[1]%
{\par\fbox{\begin{minipage}{0.9\linewidth}\textbf{TODO:} #1\end{minipage}}\par}
\newcommand{\bfr}{\mathbf{r}}
\newcommand{\bfmu}{\boldsymbol{\mu}}
\newcommand{\bfone}{\boldsymbol{1}}
\newcommand{\bfOmega}{\boldsymbol{\Omega}}
\newcommand{\bfs}{\mathbf{s}}
\newcommand{\bfa}{\mathbf{a}}
\newcommand{\bfQ}{\mathbf{Q}}
\newcommand{\bfk}{\mathbf{k}}
\newcommand{\bft}{\mathbf{t}}
\newcommand{\bfw}{\mathbf{w}}
\newcommand{\bfx}{\mathbf{x}}
\newcommand{\bfX}{\mathbf{X}}
\newcommand{\bfy}{\mathbf{y}}
\newcommand{\bfY}{\mathbf{Y}}
\newcommand{\bfz}{\mathbf{z}}
\newcommand{\bfZ}{\mathbf{Z}}
\newcommand{\bfzero}{\boldsymbol{0}}
\newcommand{\bfzeta}{\boldsymbol{\zeta}}
\newcommand{\C}{\mathbb{C}}
\newcommand{\calS}{\mathcal{S}}
\newcommand{\E}{\mathbb{E}}
\DeclareMathOperator{\grad}{grad}
\newcommand{\ip}[2]{\langle #1, #2\rangle}
\renewcommand{\MR}[1]{}
\renewcommand{\P}{\mathbb{P}}
\newcommand{\R}{\mathbb{R}}
\newcommand{\sphi}{\sqrt{\phi_{n}}}
\newcommand{\sphii}{\phi_n^{-1/2}}
\newcommand{\stirlingpartition}[2]{\genfrac{\{}{\}}{0pt}{}{#1}{#2}}
\DeclareMathOperator{\Cov}{Cov}
\newcommand{\cupdot}{\mathbin{\mathaccent\cdot\cup}}
\DeclarePairedDelimiter{\abs}{\lvert}{\rvert}
\DeclarePairedDelimiter{\norm}{\lVert}{\rVert}
\DeclarePairedDelimiter{\iverson}{[}{]}
\newtheorem{theorem}{Theorem}
\newtheorem*{theoremnonumber}{Theorem}
\newtheorem{lemma}{Lemma}[section]
\newtheorem{proposition}[lemma]{Proposition}
\newtheorem{corollary}[lemma]{Corollary}
\theoremstyle{definition}
\newtheorem{definition}[lemma]{Definition}
\theoremstyle{remark}
\numberwithin{equation}{section}
\title{Higher Dimensional Quasi-Power Theorem and Berry--Esseen Inequality}
\author{Clemens Heuberger}
\address{Institut f\"ur Mathematik, Alpen-Adria-Universit\"at Klagenfurt, Austria}
\email{clemens.heuberger@aau.at}
\thanks{The authors are supported by the Austrian Science Fund (FWF):
  P~24644-N26.\\ This is the full version of the extended abstract \cite{Heuberger-Kropf:2016:quasi-power-arxiv}.}
\author{Sara Kropf}
\address{Institut f\"ur Mathematik, Alpen-Adria-Universit\"at
  Klagenfurt, Austria and Institute of Statistical Science, Academia
  Sinica, Taipei, Taiwan}
\email{sara.kropf@aau.at, sarakropf@stat.sinica.edu.tw}
\keywords{Quasi-power theorem, Berry--Esseen inequality, limiting distribution,
  central limit theorem}
\subjclass[2010]{60F05; 
60C05
}
\begin{document}

\begin{abstract}Hwang's quasi-power theorem asserts that a sequence of
  random variables whose moment generating functions are approximately given by
  powers of some analytic function is asymptotically normally distributed.
  This theorem is generalised to higher dimensional random
  variables. To obtain this result, a higher dimensional analogue of the Berry--Esseen
  inequality is proved, generalising a two-dimensional version by
  Sadikova.
\end{abstract}

\maketitle

\section{Introduction}
Asymptotic normality is a frequently occurring phenomenon in combinatorics, the
classical central limit theorem being the very first example. The first step in
the proof is the observation that the
moment generating function of the sum of $n$ identically independently distributed
random variables is the $n$-th power of the moment generating function of the
distribution underlying the summands. As similar moment generating functions
occur in many examples in combinatorics, a general theorem to prove asymptotic
normality is desirable. Such a theorem was proved by
Hwang~\cite{Hwang:1998}, usually called the ``quasi-power theorem''.

\begin{theoremnonumber}[Hwang~\cite{Hwang:1998}]
  Let $\{\Omega_n\}_{n\ge 1}$ be a sequence of  integral random
  variables. Suppose that the moment generating function satisfies the asymptotic expression
  \begin{equation}\label{eq:moments-1d}
    M_n(s):=\E(e^{\Omega_ns})=e^{W_n(s)}(1+O(\kappa_n^{-1})),
  \end{equation}
  the $O$-term being uniform for $\abs{s}\le \tau$, $s\in\C$,
  $\tau>0$, where
  \begin{enumerate}
  \item $W_n(s)=u(s)\phi_{n}+v(s)$, with $u(s)$ and $v(s)$
    analytic for $\abs{s}\le \tau$ and independent of $n$; and $u''(0)\neq 0$;
  \item $\lim_{n\to\infty}\phi_{n}=\infty$;
  \item $\lim_{n\to\infty}\kappa_n=\infty$.
  \end{enumerate}

  Then the distribution of $\Omega_n$ is asymptotically normal, i.e.,
  \begin{equation*}
    \sup_{x\in\R}\bigg\vert\P\bigg(\frac{\Omega_n- u'(0)\phi_{n}}{\sqrt{u''(0)\phi_{n}}} <
      x\bigg)-
    \Phi(x)\bigg\vert=O\bigg(\frac{1}{\sqrt{\phi_{n}}}+\frac{1}{\kappa_n}\bigg),
  \end{equation*}
  where $\Phi$ denotes the standard normal distribution
  \begin{equation*}
    \Phi(x)=\frac{1}{\sqrt{2\pi}}\int_{-\infty}^{x}\exp\Big(-\frac12
      y^2\Big)\,dy.
  \end{equation*}
\end{theoremnonumber}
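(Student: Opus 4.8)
The plan is to run the classical Fourier-analytic argument: pass to characteristic functions, compare the characteristic function of the normalised variable with $e^{-t^2/2}$ on an interval of length $\asymp\sqrt{\phi_n}$, and convert this into a Kolmogorov-distance bound by Esseen's smoothing inequality. (One could alternatively specialise the higher-dimensional quasi-power theorem and Berry--Esseen inequality established later in this paper to dimension one.)

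First I would pass to cumulants. The relative-error bound forces $M_n(s)\neq0$ on $\abs{s}\le\tau$ for all large $n$, so $K_n(s):=\log M_n(s)$ is analytic there with $K_n(0)=0$, and $K_n(s)=u(s)\phi_n+v(s)+h_n(s)$ where $h_n$ is analytic and $h_n(s)=O(\kappa_n^{-1})$ uniformly on the disc; Cauchy's estimates then bound $h_n'$ and $h_n''$ near $0$ by $O(\kappa_n^{-1})$ as well. Since $\E(\Omega_n)=K_n'(0)=u'(0)\phi_n+O(1)$ and $\operatorname{Var}(\Omega_n)=K_n''(0)=u''(0)\phi_n+O(1)$ are real while $\phi_n\to\infty$, the numbers $u'(0)$, $u''(0)$ are real; together with $u''(0)\neq0$ and $\operatorname{Var}(\Omega_n)\ge0$ this forces $u''(0)>0$, and similarly $K_n(0)=0$ forces $u(0)=v(0)=0$. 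Hence $\mu_n:=u'(0)\phi_n$ and $\sigma_n:=\sqrt{u''(0)\phi_n}$ are the mean and standard deviation of $\Omega_n$ up to $O(1)$, and $Y_n:=(\Omega_n-\mu_n)/\sigma_n$ has $\E(Y_n)=O(\phi_n^{-1/2})$ and $\operatorname{Var}(Y_n)=1+O(\phi_n^{-1})$.

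I would then write, for $\abs{t}\le\tau\sigma_n$, the characteristic function of $Y_n$ as $\hat F_n(t):=\E(e^{\mathrm{i} tY_n})=\exp\bigl(K_n(\mathrm{i} t/\sigma_n)-\mathrm{i} t\mu_n/\sigma_n\bigr)$. Taylor-expanding $u$, $v$, $h_n$ at $0$ and using $\sigma_n^2=u''(0)\phi_n$, the linear term of $\phi_n u$ cancels the centring and the quadratic term yields $-t^2/2$, so that for $\abs{t}\le\delta\phi_n^{1/6}$
\[
  \hat F_n(t)=e^{-t^2/2}\Bigl(1+O\bigl((\abs{t}+\abs{t}^3)\phi_n^{-1/2}\bigr)+O(\kappa_n^{-1})\Bigr),
\]
the first error coming from the cubic-and-higher coefficients of $\phi_n u$ and from $v$, the second from $h_n$. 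For larger $\abs{t}$ this is useless; instead, after shrinking $\tau$ once and for all, one has $\Re u(\mathrm{i} y)\le-\tfrac14u''(0)y^2$ for $\abs{y}\le\tau$ (since $u(0)=0$, $u''(0)>0$), hence $\abs{\hat F_n(t)}=\abs{M_n(\mathrm{i} t/\sigma_n)}\le e^{\Re W_n(\mathrm{i} t/\sigma_n)}(1+O(\kappa_n^{-1}))=O(e^{-t^2/4})$ throughout $\abs{t}\le\tau\sigma_n$, which on $\abs{t}\ge\delta\phi_n^{1/6}$ is smaller than every power of $\phi_n^{-1}$; the same holds trivially for $e^{-t^2/2}$.

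Finally I would apply Esseen's smoothing inequality
\[
  \sup_{x\in\R}\bigl\lvert\P(Y_n<x)-\Phi(x)\bigr\rvert\le\frac1\pi\int_{-T}^{T}\biggl\lvert\frac{\hat F_n(t)-e^{-t^2/2}}{t}\biggr\rvert\,dt+\frac{c}{T}
\]
with $T:=\tau\sigma_n\asymp\sqrt{\phi_n}$, so the boundary term is $O(\phi_n^{-1/2})$. Splitting the integral at $\abs{t}=\delta\phi_n^{1/6}$, the outer part is $\le\delta^{-1}\phi_n^{-1/6}\int_{\abs{t}\ge\delta\phi_n^{1/6}}O(e^{-t^2/4})\,dt$, which is super-polynomially small; the inner part is, by the expansion above, at most $\int e^{-t^2/2}\bigl(O((\abs{t}^2+1)\phi_n^{-1/2})+\abs{t}^{-1}O(\kappa_n^{-1})\bigr)\,dt$, and the apparent $\abs{t}^{-1}$ singularity at the origin does no harm because $\hat F_n(0)=1$ coincides with the value at $0$ of $t\mapsto e^{-t^2/2}$ and both functions are Lipschitz near $0$ with a bound uniform in $n$ (as $\operatorname{Var}(Y_n)=1+O(\phi_n^{-1})$), so the inner part is $O(\phi_n^{-1/2}+\kappa_n^{-1})$. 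Summing these contributions gives the asserted $O(\phi_n^{-1/2}+\kappa_n^{-1})$. The step I expect to need the most care is exactly this last one: making the $O(\kappa_n^{-1})$ from $h_n$ interact correctly with the weight $1/t$ near the origin — exploiting $\hat F_n(0)=1$ rather than a crude triangle inequality — and choosing $T$ and the split point so that neither error source is over-counted; with the cancellation of the linear term in place, the Taylor bookkeeping for $u$, $v$, $h_n$ is routine.
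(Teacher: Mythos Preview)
Your route is the paper's route specialised to $m=1$: there $\Lambda$ is the identity, Theorem~\ref{theorem:Berry-Esseen-dimension-m} becomes the classical Esseen smoothing inequality, and Lemmas~\ref{lemma:characteristic-function-single-bound}--\ref{le:integral-bound} carry out exactly the characteristic-function expansion and integral estimate you outline, with $T\asymp\sqrt{\phi_n}$. You also correctly isolate $u(0)=v(0)=0$, hence $h_n(0)=0$, and $h_n',h_n''=O(\kappa_n^{-1})$ via Cauchy.

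The gap is where you say it is, and your stated remedy does not close it. ``Lipschitz near $0$ with a bound uniform in $n$'' yields only $\lvert\hat F_n(t)-e^{-t^2/2}\rvert/\lvert t\rvert=O(1)$, so the contribution of $\lvert t\rvert\le1$ to the Esseen integral is $O(1)$, not $O(\phi_n^{-1/2}+\kappa_n^{-1})$; splitting instead at a shrinking threshold leaves a factor $\log\phi_n$ from $\int_\epsilon^{\cdot}\kappa_n^{-1}\lvert t\rvert^{-1}\,dt$. What you actually need is that the \emph{difference} has Lipschitz constant $O(\phi_n^{-1/2}+\kappa_n^{-1})$ on $\lvert t\rvert\le1$; this is true but requires a genuine derivative estimate, and the parenthetical about $\operatorname{Var}(Y_n)$ does not supply it. The cleaner fix---and the one the paper uses in Lemma~\ref{lemma:characteristic-function-single-bound}---is to feed $h_n(0)=0$ back into your expansion: from $h_n(0)=0$ and $h_n'=O(\kappa_n^{-1})$ you get $h_n(it/\sigma_n)=O\bigl(\lvert t\rvert\,\kappa_n^{-1}\phi_n^{-1/2}\bigr)$, so the display sharpens to
\[
\hat F_n(t)=e^{-t^2/2}\Bigl(1+O\bigl((\lvert t\rvert+\lvert t\rvert^3)\phi_n^{-1/2}\bigr)\Bigr),
\]
the $\kappa_n^{-1}$ is absorbed, the $1/t$ singularity disappears, and the Esseen integral is $O(\phi_n^{-1/2})$ outright. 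As the paper remarks after Theorem~\ref{th:quasi-power-dD}, this refinement in fact removes the $1/\kappa_n$ from Hwang's error term entirely.
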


See Hwang's article~\cite{Hwang:1998} as well as
Flajolet-Sedgewick~\cite[Sec.~IX.5]{Flajolet-Sedgewick:ta:analy} for many
applications of this theorem.
A generalisation of the quasi-power theorem to dimension~$2$ has been provided
 in \cite{Heuberger:2007:quasi-power}. It has been used in \cite{Heuberger-Prodinger:2006:analy-alter},
 \cite{Heuberger-Prodinger:2007:hammin-weigh},
 \cite{Eagle-Gao-Omar-Panario:2008:distr:short},
 \cite{Heuberger-Kropf-Wagner:2014:combin-charac} and
 \cite{Kropf:2015:varian-and}. In~\cite[Thm.~2.22]{Drmota:2009:random},
 an $m$-dimensional version of the quasi-power theorem is stated
 without speed of convergence. Also
 in~\cite{Bender-Richmond:1983:centr}, such an $m$-dimensional theorem
 without speed of convergence is proved. There, several multidimensional applications
 are given, too. 

In contrast to many results about the speed of convergence in classical probability
 theory (see, e.g.,~\cite{Gut:2005:probab}), the sequence of
 random variables is not assumed to be independent. The only assumption is that the
 moment generating function behaves asymptotically like a large
 power. This mirrors the fact that the moment generating function of
 the sum of independent, identically distributed random variables is exactly a
 large power. The advantage is that the asymptotic expression~\eqref{eq:moments-1d} arises naturally in combinatorics by
 using techniques such as singularity analysis or saddle point
 approximation (see \cite{Flajolet-Sedgewick:ta:analy}).  

The purpose of this article is to generalise the quasi-power theorem
including the speed of convergence to
arbitrary dimension $m$. We first state this main result in
Theorem~\ref{th:quasi-power-dD} in this section. In
Section~\ref{sec:Berry--Esseen}, a new Berry--Esseen inequality
(Theorem~\ref{theorem:Berry-Esseen-dimension-m}) is presented, which we
use to prove the $m$-dimensional quasi-power theorem. In
Section~\ref{sec:exampl-mult-centr}, we give some applications of the
multidimensional quasi-power theorem. The combinatorial idea behind the formulation
of the Berry--Esseen inequality is discussed in
Section~\ref{sec:operator-Lambda}. Our Berry--Esseen bound is proved in
Section~\ref{sec:proof-Berry--Esseen}. The final
Section~\ref{sec:proof-quasi-power-theorem} is then devoted to the proof of the
quasi-power theorem.

We use the following conventions: vectors are denoted by
boldface letters such as $\bfs$, their components are then denoted by
regular letters with indices such as $s_j$. For a vector $\bfs$, $\|\bfs\|$ denotes the
maximum norm $\max\{\abs{s_j}\}$.
All implicit constants of
$O$-terms may depend on the dimension $m$ as well as on $\tau$ which is introduced in
Theorem~\ref{th:quasi-power-dD}.

Our first main result is the following $m$-dimensional version of Hwang's theorem.

\begin{theorem}\label{th:quasi-power-dD}
  Let $\{\bfOmega_n\}_{n\ge 1}$ be a sequence of $m$-dimensional real random
  vectors. Suppose that the moment generating function satisfies the asymptotic expression
  \begin{equation}\label{eq:moment-asymp}
    M_n(\bfs):=\E(e^{\langle \bfOmega_n,\bfs\rangle})=e^{W_n(\bfs)}(1+O(\kappa_n^{-1})),
  \end{equation}
  the $O$-term being uniform for $\norm{\bfs}\le \tau$, $\bfs\in\C^m$,
  $\tau>0$, where
  \begin{enumerate}
  \item $W_n(\bfs)=u(\bfs)\phi_{n}+v(\bfs)$, with $u(\bfs)$ and $v(\bfs)$
    analytic for $\norm{\bfs}\le \tau$ and independent of $n$; and the Hessian
    $H_u(\bfzero)$ of $u$ at the origin is non-singular;
  \item $\lim_{n\to\infty}\phi_{n}=\infty$;
  \item $\lim_{n\to\infty}\kappa_n=\infty$.
  \end{enumerate}

  Then, the distribution of $\bfOmega_n$ is
  asymptotically normal with speed of convergence $O(\sphii)$, i.e.,
  \begin{equation}\label{eq:quasi-power-result}
    \sup_{\bfx\in\R^{m}}\bigg\vert\P\bigg(\frac{\bfOmega_n-\grad u (\bfzero)\phi_{n}}{\sqrt{\phi_{n}}} \le
      \bfx\bigg)-
    \Phi_{H_u(\bfzero)}(\bfx)\bigg\vert=O\left(\frac{1}{\sqrt{\phi_{n}}}\right),
  \end{equation}
  where $\Phi_{\Sigma}$ denotes the distribution function
  of the non-degenerate $m$-dimensional normal distribution
  with mean $\bfzero$ and variance-covariance
  matrix $\Sigma$, i.e.,
  \begin{equation*}
    \Phi_\Sigma(\bfx)=\frac{1}{(2\pi)^{m/2}\sqrt{\det \Sigma}}\int_{\bfy\le \bfx}\exp\Big(-\frac12
      \bfy^\top \Sigma^{-1} \bfy \Big)\,d\bfy,
  \end{equation*}
  where $\bfy\le \bfx$ means $y_\ell\le x_\ell$ for $1\le \ell\le m$.

  If $H_{u}(\bfzero)$ is singular, the random variables
  \begin{equation*}
    \frac{\bfOmega_{n}-\grad u(\bfzero)\phi_{n}}{\sqrt{\phi_{n}}}
  \end{equation*}
  converge in distribution to a
  degenerate normal distribution with mean $\bfzero$ and variance-covariance
  matrix $H_{u}(\bfzero)$.
\end{theorem}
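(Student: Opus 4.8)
The plan is to pass to characteristic functions and then feed the result into the $m$-dimensional Berry--Esseen inequality of Theorem~\ref{theorem:Berry-Esseen-dimension-m}. A few preliminaries first: evaluating \eqref{eq:moment-asymp} at $\bfs=\bfzero$ gives $W_n(\bfzero)=O(\kappa_n^{-1})$, hence $u(\bfzero)\phi_n+v(\bfzero)\to0$, which (since $\phi_n\to\infty$) forces $u(\bfzero)=0$ and then $v(\bfzero)=0$; since $M_n$ is positive on $\{\bfs\in\R^m:\norm{\bfs}<\tau\}$, taking logarithms in \eqref{eq:moment-asymp} shows $\phi_n^{-1}\log M_n(\bfs)\to u(\bfs)$ there, so the fixed analytic function $u$ is real-valued on that set and, being a pointwise limit of convex functions, convex. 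Consequently $\grad u(\bfzero)\in\R^m$ and $H_u(\bfzero)$ is a real symmetric positive semidefinite matrix, positive definite in the non-singular case, so that $\Phi_{H_u(\bfzero)}$ is a genuine Gaussian. Put $\bfX_n:=\frac{\bfOmega_n-\grad u(\bfzero)\phi_n}{\sqrt{\phi_n}}$; for real $\bft$ with $\norm{\bft}\le\tau\sqrt{\phi_n}$ its characteristic function is
\[
  \varphi_n(\bft):=\E\bigl(e^{i\langle\bfX_n,\bft\rangle}\bigr)=e^{-i\sqrt{\phi_n}\langle\grad u(\bfzero),\bft\rangle}\,M_n\bigl(i\bft/\sqrt{\phi_n}\bigr),
\]
and by \eqref{eq:moment-asymp} (which, crucially, is assumed for complex $\bfs$) $M_n(i\bft/\sqrt{\phi_n})=e^{W_n(i\bft/\sqrt{\phi_n})}(1+O(\kappa_n^{-1}))$.

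Next I would Taylor-expand $u$ and $v$ about the origin. Using $u(\bfzero)=v(\bfzero)=0$ and that the homogeneous parts of degree $\ge3$ of $u$ and of degree $\ge1$ of $v$ are $O(\norm{\bfs}^3)$ and $O(\norm{\bfs})$ respectively for small $\norm{\bfs}$, one gets for $\norm{\bft}\le\varepsilon\sqrt{\phi_n}$ (with $\varepsilon<\tau$ fixed)
\[
  W_n\bigl(i\bft/\sqrt{\phi_n}\bigr)-i\sqrt{\phi_n}\langle\grad u(\bfzero),\bft\rangle=-\tfrac12\bft^\top H_u(\bfzero)\bft+E_n(\bft),\qquad E_n(\bft)=O\!\left(\frac{\norm{\bft}+\norm{\bft}^3}{\sqrt{\phi_n}}\right),
\]
the linear part of $E_n$ coming from the gradient of $v$ and the cubic part from the remainder of $u$. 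As the Taylor coefficients of $u,v$ at the origin are real, the real part of the exponent of $\varphi_n(\bft)$ is $\le-\tfrac12\bft^\top H_u(\bfzero)\bft+\tfrac14\lambda\norm{\bft}^2\le-\tfrac14\lambda\norm{\bft}^2$ for $\norm{\bft}\le\varepsilon\sqrt{\phi_n}$ when $\varepsilon$ is small and $n$ large, where $\lambda>0$ is the smallest eigenvalue of $H_u(\bfzero)$; hence $\abs{\varphi_n(\bft)}\ll e^{-\lambda\norm{\bft}^2/4}$ on that range. On the smaller range $\norm{\bft}\le\phi_n^{1/8}$ one also has $E_n(\bft)\to0$ uniformly, so $e^{E_n(\bft)}(1+O(\kappa_n^{-1}))-1=O(\abs{E_n(\bft)}+\kappa_n^{-1})$ and therefore
\[
  \bigl\lvert\varphi_n(\bft)-e^{-\frac12\bft^\top H_u(\bfzero)\bft}\bigr\rvert\ll e^{-\lambda\norm{\bft}^2/4}\!\left(\frac{1+\norm{\bft}^3}{\sqrt{\phi_n}}+\frac1{\kappa_n}\right),
\]
while for $\phi_n^{1/8}\le\norm{\bft}\le\varepsilon\sqrt{\phi_n}$ the left-hand side is crudely $\ll e^{-\lambda\norm{\bft}^2/4}\ll e^{-(\lambda/4)\phi_n^{1/4}}$, smaller than every fixed power of $\phi_n^{-1}$.

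For the non-singular case I would apply Theorem~\ref{theorem:Berry-Esseen-dimension-m} to the distribution function of $\bfX_n$, the Gaussian $G=\Phi_{H_u(\bfzero)}$, and $T=\varepsilon\sqrt{\phi_n}$. Its smoothness hypotheses hold because $H_u(\bfzero)$ is a fixed positive-definite matrix, so the density of $G$ and all of its derivatives entering the bound are $O(1)$; its Fourier-side hypothesis---a bound on $\norm{\bft}\le T$ for the mixed-difference (operator-$\Lambda$) combination of $\varphi_n$ and of $\bft\mapsto e^{-\frac12\bft^\top H_u(\bfzero)\bft}$---is supplied by the estimates above, together with the analogous estimates for the restrictions of $\varphi_n$ to coordinate subspaces (these are characteristic functions of marginals of $\bfX_n$ and satisfy expansions of the same type) and, via Cauchy's estimates, for the relevant mixed derivatives. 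The contribution of $\norm{\bft}\le\phi_n^{1/8}$ to the resulting integral is $O(\phi_n^{-1/2}+\kappa_n^{-1})$ because $\int_{\R^m}e^{-\lambda\norm{\bft}^2/4}(1+\norm{\bft}^3)\,d\bft<\infty$, that of $\phi_n^{1/8}\le\norm{\bft}\le T$ is $O(\phi_n^{-A})$ for every $A>0$, and the smoothing term of the inequality is $\ll T^{-1}=\phi_n^{-1/2}$. Altogether $\sup_{\bfx}\abs{\P(\bfX_n\le\bfx)-\Phi_{H_u(\bfzero)}(\bfx)}=O(\phi_n^{-1/2}+\kappa_n^{-1})$, which yields \eqref{eq:quasi-power-result}.

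For the singular case only convergence in distribution is claimed, so I would instead use Lévy's continuity theorem: for each fixed $\bft\in\R^m$ and all $n$ with $\norm{\bft}\le\varepsilon\sqrt{\phi_n}$ the expansion gives $\varphi_n(\bft)=e^{-\frac12\bft^\top H_u(\bfzero)\bft}\exp\bigl(O\bigl((\norm{\bft}+\norm{\bft}^3)/\sqrt{\phi_n}\bigr)\bigr)(1+O(\kappa_n^{-1}))\to e^{-\frac12\bft^\top H_u(\bfzero)\bft}$ as $n\to\infty$; the limit is continuous at $\bfzero$ and is exactly the characteristic function of the centred (possibly degenerate) normal law with variance-covariance matrix $H_u(\bfzero)$, so $\bfX_n$ converges to it in distribution. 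The step I expect to be the main obstacle is the application of Theorem~\ref{theorem:Berry-Esseen-dimension-m}: in dimension $m\ge2$ one cannot simply integrate $\abs{\varphi_n(\bft)-e^{-\frac12\bft^\top H_u(\bfzero)\bft}}/\abs{t_1\cdots t_m}$, since this diverges near the coordinate hyperplanes, so the estimates must be matched to the precise operator-$\Lambda$ form of that inequality---which in particular forces one to control all restrictions of $\varphi_n$ to coordinate subspaces together with their mixed derivatives. A secondary difficulty is that, the $\bfOmega_n$ not being assumed independent, every quantitative input has to be extracted from the analytic expansion of $W_n$ rather than from a convolution structure.
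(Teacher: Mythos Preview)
Your overall architecture---expand the characteristic function of $\bfX_n$, feed the result into Theorem~\ref{theorem:Berry-Esseen-dimension-m} with $T\asymp\sqrt{\phi_n}$, and handle the singular case by L\'evy's continuity theorem---matches the paper's. The treatment of the integral in the Berry--Esseen bound (use that $\Lambda$ vanishes on coordinate hyperplanes and replace the difference by a Cauchy estimate of a mixed derivative near those hyperplanes) is also the right idea, and the paper makes it precise essentially as you sketch. One omission is that Theorem~\ref{theorem:Berry-Esseen-dimension-m} contains, besides the integral and the smoothing term, the lower-dimensional summands $\sup_{\bfz_J}\abs{F_{\bfX_J}(\bfz_J)-F_{\bfY_J}(\bfz_J)}$; the paper disposes of these by applying the whole argument recursively to the marginals, and you should say so.

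There is, however, a genuine gap: your final bound is $O(\phi_n^{-1/2}+\kappa_n^{-1})$, and you then assert this ``yields \eqref{eq:quasi-power-result}''. It does not---the theorem claims $O(\phi_n^{-1/2})$ with no $\kappa_n^{-1}$ term, and nothing in the hypotheses forces $\kappa_n\gg\sqrt{\phi_n}$. The paper removes this term by a refinement you have not made. Writing $M_n(\bfs)=e^{W_n(\bfs)}(1+E(\bfs))$ with $E(\bfs)=O(\kappa_n^{-1})$ uniformly in $\norm{\bfs}\le\tau$, one has $E(\bfzero)=0$ (since $M_n(\bfzero)=1$ and $W_n(\bfzero)=0$); Cauchy's estimate on a polydisc of fixed radius then upgrades the uniform bound to
\[
  E(\bfs)=O\!\left(\frac{\norm{\bfs}}{\kappa_n}\right),\qquad \norm{\bfs}<\tau/2.
\]
Substituting $\bfs=i\bft/\sqrt{\phi_n}$ gives $E(i\bft/\sqrt{\phi_n})=O(\norm{\bft}/(\kappa_n\sqrt{\phi_n}))$, which is absorbed into the $O(\norm{\bft}/\sqrt{\phi_n})$ already present in your $E_n(\bft)$. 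With this, the characteristic-function difference carries only the factor $(\norm{\bft}+\norm{\bft}^3)/\sqrt{\phi_n}$ and no separate $\kappa_n^{-1}$, and your integral estimate then genuinely gives $O(\phi_n^{-1/2})$. Without this step your bound near $\bft=\bfzero$ reads $\abs{\Lambda(\varphi_n)-\Lambda(\varphi_\bfY)}\ll\kappa_n^{-1}$, which after the Cauchy/derivative argument still contributes $\kappa_n^{-1}$ to the integral over $\norm{\bft}\le1$; so the extra term is real, not an artefact of a crude inequality.
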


Note that in the case of the singular $H_{u}(\bfzero)$, a uniform speed of
convergence cannot be guaranteed.
 To see this,
  consider the (constant) sequence of random variables $\Omega_{n}$
  which takes values $\pm1$ each with probability $1/2$. Then the
  moment generating function is $(e^{t}+e^{-t})/2$,
which is of the form \eqref{eq:moment-asymp} with $\phi_{n}=n$,
$u(s)=0$, $v(s)=\log (e^{t}+e^{-t})/2$ and $\kappa_{n}$ arbitrary. However, the distribution function of $\Omega_{n}/\sqrt{n}$ is
  given by
  \begin{equation*}
    \mathbb{P}\biggl(\frac{\Omega_{n}}{\sqrt{n}}\le x\biggr)=
    \begin{cases}
      0& \text{if }x<-1/\sqrt{n},\\
      1/2& \text{if }-1/\sqrt{n}\le x<1/\sqrt{n},\\
      1& \text{if }1/\sqrt{n}\le x,
    \end{cases}
  \end{equation*}
  which does not converge uniformly.

In contrast to the original quasi-power theorem, the error term in our result does not
contain the summand $O(1/\kappa_n)$. In fact, this summand could also be
omitted in the original proof of the quasi-power theorem by using a better
estimate for the error $E_n(s)=M_n(s)e^{-W_{n}(s)}-1$, cf.\ the proof of our
Lemma~\ref{lemma:characteristic-function-single-bound}.

The order of the error is
optimal (without further assumptions on the random variables), as it is
the case for the one-dimensional Berry-Esseen inequality. See,
for example, the approximation of a binomial distribution by the
normal distribution \cite[\S~1.2]{Petrov:2000:class-type}.

The proof of Theorem~\ref{th:quasi-power-dD} relies on an
$m$-dimensional Berry--Esseen inequality
(Theorem~\ref{theorem:Berry-Esseen-dimension-m}). It is a generalisation of
Sadikova's result~\cite{Sadikova:1966:esseen, Sadikova:1966:esseen:englisch} in
dimension $2$. The main challenge is to provide a version which leads to
bounded integrands around the origin, but still allows to use excellent bounds for the
tails of the characteristic functions. To achieve this, linear combinations
involving all partitions of the set $\{1,\ldots, m\}$ are used.

Note that there are several generalisations of the one-dimensional 
Berry--Esseen inequality \cite{Berry:1941:gauss,Esseen:1945:fourier}
to arbitrary dimension, see, e.g., Gamkrelidze~\cite{Gamkrelidze:1977,
  Gamkrelidze:1977:englisch} and Prakasa
Rao~\cite{Rao:2002:anoth-esseen}. 
 However, using these results would lead to
a less precise error term in~\eqref{eq:quasi-power-result}, see the end of
Section~\ref{sec:Berry--Esseen} for more details. For that reason we generalise Sadikova's
result, which was already successfully used by the first author
in~\cite{Heuberger:2007:quasi-power} to prove a $2$-dimensional
quasi-power theorem. Also note
 that our theorem can deal with discrete random variables, too, in contrast
 to \cite{Roussas:2001:esseen}, where density functions are considered.

For the sake of completeness, we also state the following result
about the moments of $\bfOmega_{n}$.
\begin{proposition}\label{proposition:moments}
  The cross-moments of $\bfOmega_{n}$ satisfy
  \begin{equation*}
    \frac{1}{\prod_{\ell=1}^{m}k_{\ell}!}\mathbb E\bigg(\prod_{\ell=1}^{m}\Omega_{n,\ell}^{k_{\ell}}\bigg)=p_{\bfk}(\phi_{n})+O\big(\kappa_{n}^{-1}\phi_{n}^{k_{1}+\cdots+k_{m}}\big),
  \end{equation*}
for $k_{\ell}$ nonnegative integers, where $p_{\bfk}$ is a polynomial of degree
$\sum_{\ell=1}^{m}k_{\ell}$ defined by
\begin{equation*}
  p_{\bfk}(X)=[s_{1}^{k_{1}}\cdots s_{m}^{k_{m}}]e^{u(\bfs)X+v(\bfs)}.
\end{equation*}

In particular, the mean and the variance-covariance matrix are
\begin{align*}
  \mathbb E(\bfOmega_{n})&=\grad u(\bfzero)\phi_{n}+\grad
  v(\bfzero)+O(\kappa_{n}^{-1}),\\
  \Cov(\bfOmega_{n})&=H_{u}(\bfzero)\phi_{n}+H_{v}(\bfzero)+O(\kappa_{n}^{-1}),
\end{align*}
respectively.
\end{proposition}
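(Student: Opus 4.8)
The plan is to extract the moment asymptotics directly from the hypothesis~\eqref{eq:moment-asymp} by differentiating the moment generating function at $\bfs=\bfzero$. Writing $M_n(\bfs)=e^{W_n(\bfs)}(1+E_n(\bfs))$ with $W_n(\bfs)=u(\bfs)\phi_n+v(\bfs)$ and $E_n(\bfs)=O(\kappa_n^{-1})$ uniformly on the polydisc $\norm{\bfs}\le\tau$, I would read off the cross-moments as Taylor coefficients: since $M_n$ is analytic on $\norm{\bfs}\le\tau$ (being a finite-radius power series with nonnegative-combination coefficients $\frac{1}{\prod k_\ell!}\E(\prod\Omega_{n,\ell}^{k_\ell})$), we have
\begin{equation*}
  \frac{1}{\prod_{\ell=1}^m k_\ell!}\E\Big(\prod_{\ell=1}^m\Omega_{n,\ell}^{k_\ell}\Big)
  =[s_1^{k_1}\cdots s_m^{k_m}]M_n(\bfs).
\end{equation*}
The first term $[s_1^{k_1}\cdots s_m^{k_m}]e^{W_n(\bfs)}=[s_1^{k_1}\cdots s_m^{k_m}]e^{u(\bfs)\phi_n+v(\bfs)}=p_{\bfk}(\phi_n)$ by definition of $p_{\bfk}$, and one checks that $p_{\bfk}$ has degree exactly $k_1+\cdots+k_m$ in $X$ because the lowest-order contribution to the $s_1^{k_1}\cdots s_m^{k_m}$-coefficient of $e^{u(\bfs)X}$ comes from the $(k_1+\cdots+k_m)$-th power of the linear part of $u$, whose coefficient is a polynomial in $X$ of that degree with leading term a nonzero constant times a product of first partials of $u$ (here using that $u$ is not identically affine, which follows from $H_u(\bfzero)$ being considered; more carefully, the degree claim only needs the linear part of $u$ to be nonzero, or can be interpreted as "degree at most" when it vanishes — I would state it as the exact degree under the standing nondegeneracy and otherwise note the harmless reduction).

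The main work is bounding the contribution of the error term, namely showing
\begin{equation*}
  [s_1^{k_1}\cdots s_m^{k_m}]\big(e^{W_n(\bfs)}E_n(\bfs)\big)=O\big(\kappa_n^{-1}\phi_n^{k_1+\cdots+k_m}\big).
\end{equation*}
For this I would apply Cauchy's coefficient formula on a polydisc of radius $\rho$ with $0<\rho<\tau$ fixed, giving the bound
\begin{equation*}
  \Big|[s_1^{k_1}\cdots s_m^{k_m}]\big(e^{W_n(\bfs)}E_n(\bfs)\big)\Big|
  \le \rho^{-(k_1+\cdots+k_m)}\sup_{\norm{\bfs}=\rho}\big|e^{W_n(\bfs)}E_n(\bfs)\big|.
\end{equation*}
Since $E_n=O(\kappa_n^{-1})$ uniformly on $\norm{\bfs}\le\tau$ and $|e^{W_n(\bfs)}|=|e^{u(\bfs)\phi_n+v(\bfs)}|\le e^{\phi_n\,\Re u(\bfs)+O(1)}$, I need a uniform bound on $|e^{W_n(\bfs)}|$ on the torus $\norm{\bfs}=\rho$. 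This is the one genuinely delicate point: a priori $\Re u(\bfs)$ could be positive somewhere on $\norm{\bfs}=\rho$, making $|e^{W_n}|$ grow like $e^{c\phi_n}$. The resolution is that $M_n(\bfs)$, being a moment generating function of a real random vector, satisfies $|M_n(\bfs)|\le M_n(\Re\bfs)\le\max_{\pm}M_n(\pm\rho,\ldots,\pm\rho)$ on $\norm{\bfs}\le\rho$, and $M_n(\bft)=e^{W_n(\bft)}(1+O(\kappa_n^{-1}))$ for real $\bft$, so $e^{W_n(\bfs)}=M_n(\bfs)(1+O(\kappa_n^{-1}))^{-1}$ gives $|e^{W_n(\bfs)}|\le C\max_{\bft\in\{\pm\rho\}^m}e^{W_n(\bft)}$. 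Hence $\rho^{-(k_1+\cdots+k_m)}\sup_{\norm{\bfs}=\rho}|e^{W_n(\bfs)}|\le C'\rho^{-(k_1+\cdots+k_m)}e^{c'\phi_n}$ — which is far worse than $\phi_n^{k_1+\cdots+k_m}$. So the crude polydisc radius $\rho$ must instead be taken to \emph{shrink} with $n$, specifically $\rho=\rho_n\asymp\phi_n^{-1/2}$ (mimicking the scaling in the quasi-power theorem), on which $\phi_n\,\Re u(\bfs)=\phi_n\,\Re u(\bfzero+\bfs)=O(\phi_n\rho_n^2)=O(1)$ by a second-order Taylor expansion of $\Re u$ around $\bfzero$ (the linear term in $\Re u(\bfs)$ for $\bfs$ on the torus of radius $\rho_n$ is $O(\rho_n)$, which contributes $O(\phi_n\rho_n)$ — still too large).

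Rethinking: the right choice is $\rho_n\asymp\phi_n^{-1}$, on which $\phi_n\cdot O(\rho_n)=O(1)$, so $|e^{W_n(\bfs)}|=O(1)$ uniformly on $\norm{\bfs}=\rho_n$, and then Cauchy's estimate yields $|[s_1^{k_1}\cdots s_m^{k_m}](e^{W_n}E_n)|=O(\rho_n^{-(k_1+\cdots+k_m)}\kappa_n^{-1})=O(\phi_n^{k_1+\cdots+k_m}\kappa_n^{-1})$, exactly as claimed. Assembling: $\frac{1}{\prod k_\ell!}\E(\prod\Omega_{n,\ell}^{k_\ell})=p_{\bfk}(\phi_n)+O(\kappa_n^{-1}\phi_n^{k_1+\cdots+k_m})$. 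For the final display, I would apply this with $\bfk=\bfe_\ell$ (total degree $1$): $[s_\ell]e^{u(\bfs)X+v(\bfs)}=(\partial_\ell u(\bfzero))X+\partial_\ell v(\bfzero)=p_{\bfe_\ell}(X)$, giving $\E(\bfOmega_n)=\grad u(\bfzero)\phi_n+\grad v(\bfzero)+O(\kappa_n^{-1})$; and with $\bfk=\bfe_i+\bfe_j$ (total degree $2$), computing $[s_is_j]e^{u(\bfs)X+v(\bfs)}$ and subtracting $\E(\Omega_{n,i})\E(\Omega_{n,j})$ to form the covariance, the degree-$2$-in-$X$ terms cancel and one is left with $\Cov(\bfOmega_n)=H_u(\bfzero)\phi_n+H_v(\bfzero)+O(\kappa_n^{-1})$ — here one must verify the cancellation of the $\phi_n^2$ terms, which happens because the coefficient of $X^2$ in $[s_is_j]e^{u(\bfs)X}$ is $(\partial_i u(\bfzero))(\partial_j u(\bfzero))$, precisely the product appearing in $\E(\Omega_{n,i})\E(\Omega_{n,j})$, so that the error terms combine to $O(\kappa_n^{-1}\phi_n^2)\cdot$... — I would need $\kappa_n^{-1}\phi_n^2\cdot$ something to collapse, so more care: actually $\E(\Omega_{n,i})\E(\Omega_{n,j})=(\partial_i u(\bfzero)\phi_n+O(1))(\partial_j u(\bfzero)\phi_n+O(1))=\partial_i u(\bfzero)\partial_j u(\bfzero)\phi_n^2+O(\phi_n)$, and $\E(\Omega_{n,i}\Omega_{n,j})=[s_is_j]e^{u\phi_n+v}+O(\kappa_n^{-1}\phi_n^2)$ whose $X^2$-coefficient matches, leaving $\Cov=H_u(\bfzero)\phi_n+H_v(\bfzero)+O(\phi_n\kappa_n^{-1})+O(\kappa_n^{-1}\phi_n^2)$; since the claimed error is only $O(\kappa_n^{-1})$ one actually needs the sharper bound $E_n(\bfs)=O(\kappa_n^{-1})$ to be leveraged together with the Cauchy estimate on radius $\rho_n\asymp 1$ (fixed!) for the \emph{low-order} coefficients where $|e^{W_n}|$ on a fixed torus is the issue again — so in fact for the low-order moments one splits $M_n=e^{W_n}+e^{W_n}E_n$ and bounds the second summand's low-order coefficients by noting that $\E(\prod\Omega_{n,\ell}^{k_\ell})$ and $p_{\bfk}(\phi_n)$ individually have the stated polynomial growth, whence their difference, known a priori to be $O(\kappa_n^{-1}\phi_n^{k_1+\cdots+k_m})$ from the shrinking-disc argument, must have its top $\phi_n$-powers agree, and a bootstrapping/induction on total degree sharpens $O(\kappa_n^{-1}\phi_n^d)$ to $O(\kappa_n^{-1}\phi_n^{d-1})$ for the centered moments. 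The cleanest route, which I would adopt, is: prove the general cross-moment statement with the shrinking disc $\rho_n\asymp\phi_n^{-1}$; then observe the mean and covariance formulas are the $d=1$ and $d=2$ specializations after expressing the covariance via $\Cov(\bfOmega_n)_{ij}=\E(\Omega_{n,i}\Omega_{n,j})-\E(\Omega_{n,i})\E(\Omega_{n,j})$ and noting $p_{\bfe_i+\bfe_j}(X)-p_{\bfe_i}(X)p_{\bfe_j}(X)=(H_u(\bfzero))_{ij}X+(H_v(\bfzero))_{ij}$ as polynomials, so the $O(\kappa_n^{-1})$ error in each of $\E(\Omega_{n,i})$, $\E(\Omega_{n,j})$ propagates only to $O(\kappa_n^{-1}\phi_n)$ in the product — which exceeds the claimed $O(\kappa_n^{-1})$. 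The honest fix is that the Proposition's covariance error should be (and I believe is intended as) absorbed correctly because $\grad u(\bfzero)\partial_j v(\bfzero)$-type cross terms are genuinely present at order $\phi_n$ but cancel against the $X^1$-coefficient of $p_{\bfe_i+\bfe_j}$; I would carry out this bookkeeping explicitly, and the main obstacle throughout is precisely this: controlling $|e^{W_n(\bfs)}|$ on complex tori — resolved by the real-variable domination $|M_n(\bfs)|\le M_n(\Re\bfs)$ combined with a disc radius shrinking like $\phi_n^{-1}$ — after which everything reduces to routine (if tedious) coefficient extraction and the organization of the $\phi_n$-degree cancellations in forming centered moments.
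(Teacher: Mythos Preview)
Your shrinking-polydisc Cauchy argument with radius $\rho_n\asymp\phi_n^{-1}$ correctly establishes the general cross-moment formula with error $O(\kappa_n^{-1}\phi_n^{k_1+\cdots+k_m})$; this is essentially the argument the paper has in mind (it simply cites Hwang). But the mean and covariance with the sharper error $O(\kappa_n^{-1})$ do \emph{not} follow from that general formula. Specialising to $\abs{\bfk}=1$ yields only $\E(\Omega_{n,\ell})=p_{\bfe_\ell}(\phi_n)+O(\kappa_n^{-1}\phi_n)$, not $O(\kappa_n^{-1})$ as you silently write; for $\abs{\bfk}=2$ you notice the mismatch, but none of your proposed fixes (domination $\abs{M_n(\bfs)}\le M_n(\Re\bfs)$, bootstrapping, ``absorbing'' cross terms) can work, because an $O(\kappa_n^{-1}\phi_n^{2})$ error term cannot be improved by cancelling main terms against one another.

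The missing idea is to apply Cauchy's estimate on a \emph{fixed} polydisc---not to the product $e^{W_n}E_n$ (where indeed $\abs{e^{W_n}}$ is uncontrolled), but to $E_n$ itself, or more cleanly to $\log(1+E_n)$. Since $E_n=O(\kappa_n^{-1})$ uniformly on $\norm{\bfs}\le\tau$, every partial derivative of $\log(1+E_n)$ at $\bfzero$ is $O(\kappa_n^{-1})$; the paper carries out exactly this step in the proof of Lemma~\ref{lemma:characteristic-function-single-bound}. Now observe that the mean and the covariance are precisely the first and second partial derivatives of the cumulant generating function $\log M_n(\bfs)=W_n(\bfs)+\log(1+E_n(\bfs))$ at $\bfs=\bfzero$, so they equal $\partial_\ell W_n(\bfzero)+O(\kappa_n^{-1})$ and $\partial_i\partial_j W_n(\bfzero)+O(\kappa_n^{-1})$ respectively, which is the assertion. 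The point you kept circling back to---that $\abs{e^{W_n}}$ blows up on a fixed torus---is simply bypassed by differentiating $\log M_n$ rather than $M_n$.
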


\section{A Berry--Esseen Inequality}\label{sec:Berry--Esseen}

This section is devoted to a generalisation of Sadikova's Berry--Esseen
inequality~\cite{Sadikova:1966:esseen, Sadikova:1966:esseen:englisch} in
dimension 2 to dimension $m$. Before stating the theorem, we introduce our notation.

Let $L=\{1,\ldots, m\}$. For $K\subseteq L$, we write
$\bfs_K=(s_k)_{k\in K}$ for the projection of $\bfs\in\C^L$ to $\C^K$.
For $J\subseteq K\subseteq L$, let
$\chi_{J,K}\colon \C^{J}\to\C^{K}$, $(s_{j})_{j\in J}\mapsto
(s_{k}\iverson{k\in J})_{k\in K}$ be an injection from $\C^{J}$ into
$\C^{K}$. Similarly, let $\psi_{J,K}\colon \C^{K}\to\C^{K}$,
$(s_{k})_{k\in K}\mapsto (s_{k}\iverson{k\in J})_{k\in K}$ be the
projection which sets all coordinates corresponding to $K\setminus J$ to
$0$.

We denote the
  set of all partitions of $K$ by $\Pi_K$. We consider a partition as a set
  $\alpha=\{J_{1},\ldots,J_{k}\}$. Thus $\abs{\alpha}$ denotes the number of parts of the partition
$\alpha$. Furthermore, $J\in\alpha$ means that $J$ is a part of the
partition $\alpha$.

Now, we can define an operator which we later use to state our Berry--Esseen
inequality. The motivation behind this definition is explained at the end of
this section.

\begin{definition}\label{definition:Lambda-K}
  Let $K\subseteq L$ and $h\colon \C^K\to \C$. We define the non-linear operator
  \begin{equation*}
    \Lambda_K(h):=\sum_{\alpha\in\Pi_K}\mu_\alpha \prod_{J\in
      \alpha}h\circ \psi_{J, K}
  \end{equation*}
  where
  \begin{equation*}
    \mu_\alpha = (-1)^{\abs{\alpha}-1}(\abs{\alpha}-1)!\,.
  \end{equation*}

We denote $\Lambda_{L}$ briefly by $\Lambda$.
\end{definition}

For any random variable $\bfZ$, we denote its cumulative distribution function
by $F_\bfZ$, its density function by $f_\bfZ$ (if it exists) and its
characteristic function by $\varphi_\bfZ$.

With these definitions, we are able to state our second main result, an
$m$-dimensional version of the Berry--Esseen inequality.

\begin{theorem}\label{theorem:Berry-Esseen-dimension-m}
Let $m\ge 1$ and $\bfX$ and $\bfY$ be $m$-dimensional random variables. Assume
that $F_\bfY$ is differentiable.

Let
\begin{align*}
  A_j&=\sup_{\bfy \in\R^m}\frac{\partial F_\bfY(\bfy)}{\partial y_j},\\
  B_j&=\sum_{k=1}^{j} \stirlingpartition{j}{k} k!\ ,\\
  C_1&=\sqrt[3]{\frac{32}{\pi\bigl(1-\bigl(\frac{3}{4}\bigr)^{1/m}\bigr)}},\\
  C_2&=\frac{12}{\pi}
\end{align*}
for $1\le
  j\le m$ where $\stirlingpartition{j}{k}$ denotes a Stirling partition number
(Stirling number of the second kind).

Let $T>0$ be fixed. Then
\begin{equation}\label{eq:Berry-Esseen}
  \begin{aligned}
    \sup_{\bfz\in\R^m}\abs{F_{\bfX}(\bfz)-F_{\bfY}(\bfz)}&\le
    \frac{2}{(2\pi)^m} \int_{\norm{\bft}\le
      T}\abs[\bigg]{\frac{\Lambda(\varphi_{\bfX})(\bft)-\Lambda(\varphi_{\bfY})(\bft)}{\prod_{\ell\in
          L} t_\ell}}\,d\bft \\
    &\qquad+ 2\sum_{\emptyset\neq J\subsetneq
      L}B_{m-\abs{J}}\sup_{\bfz_J\in\R^J}\abs[\big]{F_{\bfX_{J}}(\bfz_J)-F_{\bfY_{J}}(\bfz_J)}
    \\
    &\qquad +\frac{2\sum_{j=1}^m A_j}{T}(C_1+C_2).
  \end{aligned}
\end{equation}
Existence of $\E(\bfX)$ and $\E(\bfY)$ is sufficient for the finiteness
of the integral in \eqref{eq:Berry-Esseen}.
\end{theorem}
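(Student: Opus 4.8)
The plan is to mimic the classical Esseen smoothing inequality, but applied to the operator $\Lambda$ rather than to the distribution functions directly, and then to peel off the lower-dimensional marginals by induction on $m$. First I would introduce a smoothing kernel: convolve $\bfX$ and $\bfY$ with an independent $m$-dimensional random variable $\bfw T^{-1}$, where $\bfw$ has a density supported in a box (a product of one-dimensional Fejér-type kernels) whose characteristic function is supported in $\norm{\bft}\le T$. The point of working with $\Lambda$ is exactly the property advertised at the end of the section: $\Lambda(\varphi_{\bfX})(\bft)$ vanishes to high order as any single $t_\ell\to 0$, so the quotient by $\prod_\ell t_\ell$ in the integrand of \eqref{eq:Berry-Esseen} stays bounded near the origin while still permitting the tail estimates that make the integral converge. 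So the first key step is a lemma computing, via Fourier inversion, the quantity $\sup_\bfz\abs{G_\bfX(\bfz)-G_\bfY(\bfz)}$ for the smoothed distributions $G_\bfX, G_\bfY$ (obtained from an inclusion–exclusion over $\Lambda$) in terms of the integral $\int_{\norm{\bft}\le T}\abs{(\Lambda(\varphi_\bfX)-\Lambda(\varphi_\bfY))/\prod t_\ell}\,d\bft$, together with an error coming from the kernel.

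Second, I would control the smoothing error, i.e.\ bound $\sup_\bfz\abs{F_\bfX(\bfz)-G_\bfX(\bfz)}$ and the analogous quantity for $\bfY$. Here $A_j$ enters: since $F_\bfY$ is differentiable with $\partial F_\bfY/\partial y_j\le A_j$, the difference $F_\bfY(\bfz)-F_\bfY(\bfz-\bfw/T)$ is $O(\sum_j A_j\abs{w_j}/T)$, and taking expectations over $\bfw$ (whose coordinates have bounded first moments, the source of the constants $C_1$ and $C_2$) gives a bound of order $(\sum_j A_j)/T$. For $\bfX$ one uses the standard trick: bound $F_\bfX - G_\bfX$ by comparing with $F_\bfY - G_\bfY$ plus $\sup\abs{F_\bfX-F_\bfY}$, absorbing a fraction of the latter into the left-hand side — this is why the factor $1-(3/4)^{1/m}$ appears inside $C_1$, chosen so that the absorbed fraction is $1/4$ in each coordinate and $3/4$ overall.

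Third comes the reduction from $\Lambda$ applied to distribution functions back to the plain difference $F_\bfX-F_\bfY$. By the definition of $\Lambda$ via Möbius coefficients $\mu_\alpha$ over the partition lattice, $\Lambda_K$ acting on a distribution function is, up to sign, the ``joint cumulant-type'' part, and by Möbius inversion on $\Pi_L$ one recovers $F_\bfX-F_\bfY$ as $\Lambda(F_\bfX)-\Lambda(F_\bfY)$ plus a sum over proper nonempty subsets $J\subsetneq L$ of products involving the marginals $F_{\bfX_J}-F_{\bfY_J}$. Estimating those product terms crudely (a product telescopes into a sum of single-marginal differences, each bounded by $\sup_{\bfz_J}\abs{F_{\bfX_J}(\bfz_J)-F_{\bfY_J}(\bfz_J)}$, with the number of terms after expanding all partitions giving the ordered Bell / Fubini numbers $B_{m-\abs{J}}=\sum_{k=1}^{m-\abs{J}}\stirlingpartition{m-\abs{J}}{k}k!$) yields exactly the middle term of \eqref{eq:Berry-Esseen}. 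Finally, I would assemble the three pieces and verify the last sentence: $\E(\bfX),\E(\bfY)<\infty$ makes each $\varphi$ differentiable at $0$, so each factor $h\circ\psi_{J,K}$ in $\Lambda(\varphi_\bfX)$ vanishes linearly in the corresponding $t_\ell$'s, whence $\Lambda(\varphi_\bfX)(\bft)/\prod_\ell t_\ell$ extends continuously (hence boundedly on the compact region $\norm{\bft}\le T$), giving finiteness of the integral.

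The main obstacle I expect is the bookkeeping in the inclusion–exclusion over partitions: showing cleanly that the Fourier inversion of the smoothed, $\Lambda$-twisted distribution difference produces precisely the integrand $(\Lambda(\varphi_\bfX)-\Lambda(\varphi_\bfY))/\prod_\ell t_\ell$ with the stated constant $2/(2\pi)^m$, and simultaneously that the ``leftover'' partition terms reorganise into the marginal sum with the ordered-Bell coefficients $B_{m-\abs{J}}$ — this requires carefully tracking how $\psi_{J,K}$ and $\chi_{J,K}$ interact with both the Fourier transform and the convolution, and matching the combinatorial identity $\sum_\alpha \mu_\alpha(\text{products})$ against the telescoping of a product of differences. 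Everything else (the kernel construction, the one-dimensional moment computations behind $C_1,C_2$, the absorption argument) is routine once that combinatorial core is in place.
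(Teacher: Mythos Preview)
Your proposal is correct and follows the same route as the paper: smoothing by a product kernel with compactly supported Fourier transform (the paper uses $f_P(z)=\tfrac{3}{8\pi}(\sin(z/4)/(z/4))^4$), the shift-and-absorb argument that produces the factor $(3/4)^{1/m}$ (Lemma~\ref{lemma:perturbation}), Fourier inversion via L\'evy's formula for the partition-weighted sum of products of smoothed marginals (Lemma~\ref{le:distribution-sum-bound}), and the telescoping that converts the non-trivial partition terms into the marginal sum with ordered-Bell coefficients (Lemma~\ref{lemma:lower-dimension}). One small terminological point worth tightening when you write it up: the object that Fourier inversion bounds by the integral is $\sum_{\alpha}\mu_\alpha\bigl(\prod_{J\in\alpha}F_{\bfX_J+\bfQ_J}-\prod_{J\in\alpha}F_{\bfY_J+\bfQ_J}\bigr)$, a sum of products of \emph{marginal} distribution functions, not ``$\Lambda(F_\bfX)$'' in the literal sense of Definition~\ref{definition:Lambda-K} (which would use $F\circ\psi_{J,L}$), though the partition combinatorics and the telescoping you describe are exactly what is needed.
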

Let us give two remarks on the distribution functions occurring in this theorem:
The distribution function $F_\bfY$ is non-decreasing in every
variable, thus $A_j>0$ for all $j$.
Furthermore, our general notations imply that $F_{\bfX_J}$ is a marginal
distribution of $\bfX$.

The numbers $B_j$ are known as ``Fubini numbers'' or ``ordered Bell numbers''.
They form the sequence \href{http://oeis.org/A000670}{A000670} in \cite{OEIS:2015}.

Recursive application of \eqref{eq:Berry-Esseen} leads to the following
corollary, where we no longer explicitly state the constants depending on the
dimension.

\begin{corollary}\label{corollary:Berry-Esseen}
  Let $m\ge 1$ and $\bfX$ and $\bfY$ be $m$-dimensional random
  variables. Assume
that $F_\bfY$ is differentiable and
  let
  \begin{equation*}
    A_j=\sup_{\bfy \in\R^m}\frac{\partial F_\bfY(\bfy)}{\partial y_j}, \qquad 1\le
    j\le m.
  \end{equation*}

  Then
  \begin{multline}\label{eq:Berry-Esseen-recursive}
      \sup_{\bfz\in\R^m}\abs{F_{\bfX}(\bfz)-F_{\bfY}(\bfz)}\\=
      O\biggl(\sum_{\emptyset \neq K\subseteq L}\int_{\norm{\bft_K}\le
        T}\abs[\bigg]{\frac{\Lambda_K(\varphi_{\bfX}\circ\chi_{K, L})(\bft_K)-\Lambda_K(\varphi_{\bfY}\circ\chi_{K, L})(\bft_K)}{\prod_{k\in
            K} t_k}}\,d\bft_K + \frac{\sum_{j=1}^m A_j}{T}\biggr)
  \end{multline}
  where the $O$-constants only depend on the dimension $m$.

  Existence of $\E(\bfX)$ and $\E(\bfY)$ is sufficient for the finiteness
  of the integrals in \eqref{eq:Berry-Esseen-recursive}.
\end{corollary}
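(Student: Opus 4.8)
The plan is to apply Theorem~\ref{theorem:Berry-Esseen-dimension-m} repeatedly, peeling off one coordinate-subset at a time. Observe that the right-hand side of \eqref{eq:Berry-Esseen} bounds $\sup_{\bfz\in\R^m}\abs{F_\bfX(\bfz)-F_\bfY(\bfz)}$ by three contributions: the integral term involving $\Lambda=\Lambda_L$, which after the natural identification $\C^L=\C^m$ is exactly the $K=L$ summand of \eqref{eq:Berry-Esseen-recursive}; the tail term $\frac{2\sum_j A_j}{T}(C_1+C_2)$, which is $O\bigl(\sum_j A_j/T\bigr)$ with a dimension-only constant; and the middle term, a finite linear combination of quantities $\sup_{\bfz_J}\abs{F_{\bfX_J}(\bfz_J)-F_{\bfY_J}(\bfz_J)}$ over nonempty proper subsets $J\subsetneq L$. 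The idea is to bound each of these lower-dimensional suprema by the same theorem applied to $\bfX_J$ and $\bfY_J$ in place of $\bfX$ and $\bfY$, and to iterate.

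More precisely, I would argue by (downward) induction on $\abs{K}$ the statement that
\begin{equation*}
  \sup_{\bfz_K\in\R^K}\abs{F_{\bfX_K}(\bfz_K)-F_{\bfY_K}(\bfz_K)}
  = O\biggl(\sum_{\emptyset\neq K'\subseteq K}\int_{\norm{\bft_{K'}}\le T}
  \abs[\bigg]{\frac{\Lambda_{K'}(\varphi_\bfX\circ\chi_{K',L})(\bft_{K'})-\Lambda_{K'}(\varphi_\bfY\circ\chi_{K',L})(\bft_{K'})}{\prod_{k\in K'}t_k}}\,d\bft_{K'}
  + \frac{\sum_{j=1}^m A_j}{T}\biggr),
\end{equation*}
with $O$-constants depending only on $m$. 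The base case $\abs{K}=1$ is the classical one-dimensional Berry--Esseen inequality (which is Theorem~\ref{theorem:Berry-Esseen-dimension-m} with $m=1$, where the middle sum is empty). For the inductive step, I apply Theorem~\ref{theorem:Berry-Esseen-dimension-m} to the $\abs{K}$-dimensional pair $\bfX_K$, $\bfY_K$: note that $\varphi_{\bfX_K}=\varphi_\bfX\circ\chi_{K,L}$ and likewise for $\bfY$, that $(\bfX_K)_J=\bfX_J$ for $J\subseteq K$, and that the relevant $A_j$ for $\bfY_K$ are bounded by $\sup_\bfy \partial F_\bfY/\partial y_j\le A_j$ (a marginal density is an integral of the joint density over the remaining variables; one should check that $F_{\bfY_K}$ inherits differentiability from $F_\bfY$, which it does since $A_j<\infty$ ensures the needed integrability). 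Each middle-term supremum over $J\subsetneq K$ then falls under the induction hypothesis, contributing only integrals over $K'\subseteq J\subseteq K$ plus the tail term; absorbing the finitely many (dimension-dependent) constants $B_{\abs{K}-\abs{J}}$ and the bounded iteration depth completes the step. Finiteness of the integrals under the hypothesis $\E(\bfX),\E(\bfY)<\infty$ is inherited directly from the corresponding clause in Theorem~\ref{theorem:Berry-Esseen-dimension-m}.

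The only mildly delicate point is bookkeeping: ensuring that the constants produced by the recursion stay bounded in terms of $m$ alone. Since each subset $K$ is visited at most once in the recursion tree (or, more crudely, the recursion depth is at most $m$ and the branching at each node is over the at most $2^m$ subsets $J$), the total multiplicative blow-up is bounded by a function of $m$, and the additive tail terms $\sum_j A_j/T$ coalesce into a single such term times a constant depending on $m$. I expect this constant-tracking to be the main (though routine) obstacle; the analytic content is entirely supplied by Theorem~\ref{theorem:Berry-Esseen-dimension-m}.
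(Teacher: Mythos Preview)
Your proposal is correct and matches the paper's approach: the paper itself simply states that the corollary follows by ``recursive application of \eqref{eq:Berry-Esseen}'', and your downward induction on $\abs{K}$ is exactly that. One small caveat: your justification that $F_{\bfY_K}$ inherits differentiability (``a marginal density is an integral of the joint density'') presumes a density for $\bfY$, which is not assumed; what actually transfers is the Lipschitz bound $\abs{F_{\bfY_K}(\bfz_K+he_j)-F_{\bfY_K}(\bfz_K)}\le A_j\abs{h}$, obtained by passing to the limit in the corresponding bound for $F_\bfY$, and this is all the proof of Theorem~\ref{theorem:Berry-Esseen-dimension-m} really uses.
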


In order to explain the choice of the operator $\Lambda$, we first state it in
dimension $2$:
\begin{equation}\label{eq:Sadikova-simple}
  \Lambda(h)(s_1, s_2) = h(s_1, s_2) - h(s_1, 0)h(0, s_2).
\end{equation}
This coincides with Sadikova's definition. This also shows that our
operator is non-linear as, e.g., $\Lambda(s_{1}+s_{2})(s_{1},s_{2})\neq\Lambda(s_{1})(s_{1},s_{2})+\Lambda(s_{2})(s_{1},s_{2})$.

In Theorem~\ref{theorem:Berry-Esseen-dimension-m}, we apply $\Lambda$ to
characteristic functions; so we may restrict our attention to functions $h$
with $h(\bfzero)=1$. From~\eqref{eq:Sadikova-simple}, we see that
$\Lambda(h)(s_1, 0) = \Lambda(h)(0, s_2)=0$, so that $\Lambda(h)(s_1,
s_2)/(s_1s_2)$ is bounded around the origin. This is essential for the boundedness
of the integral in Theorem~\ref{theorem:Berry-Esseen-dimension-m}. In general,
this property will be guaranteed by our particular choice of coefficients. It
is no coincidence that for $\alpha\in \Pi_L$, the coefficient $\mu_\alpha$
equals the value  $\mu(\alpha, \{L\})$ of the Möbius function in the lattice of partitions:
Weisner's theorem (see Stanley~\cite[Corollary~3.9.3]{Stanley:2012:enumer_1}) is crucial in the proof that $\Lambda(h)(\bfs)/(s_1\ldots
s_m)$ is bounded around the origin (see the proof of Lemma~\ref{lemma:Lambda-property}).

The second property is that our proof of the quasi-power theorem needs
estimates for the tails of the integral in Theorem~\ref{theorem:Berry-Esseen-dimension-m}. These estimates have to be
exponentially small in every variable, which means that every variable has to
occur in every summand. This is trivially fulfilled as every summand
in the definition of $\Lambda$ is
formulated in terms of a partition.

Note that Gamkrelidze~\cite{Gamkrelidze:1977:englisch}  (and also
Prakasa Rao~\cite{Rao:2002:anoth-esseen}) use a linear operator $L$ mapping $h$ to
\begin{equation}\label{eq:gamkrelidze}
  (s_1, s_2) \mapsto h(s_1, s_2) - h(s_1, 0) - h(0, s_2).
\end{equation}
When taking the difference of two characteristic functions, we may assume that
$h(0, 0)=0$ so that the first crucial property as defined above still
holds. However, the tails are no longer exponentially small in every variable: the last summand $h(0,s_{2})$ in~\eqref{eq:gamkrelidze} is not exponentially small in
$s_{1}$ because it is independent of $s_{1}$ and nonzero in
general. However, the first two summands are exponentially small in
$s_{1}$ by our assumption~\eqref{eq:moment-asymp}.

For that reason, using the Berry--Esseen inequality by
Gamkrelidze~\cite{Gamkrelidze:1977:englisch} to prove a quasi-power theorem leads to a less precise error term 
$O(\phi_{n}^{-1/2}\log^{m-1}\phi_n)$
in~\eqref{eq:quasi-power-result}. It can be shown that the less precise
error term necessarily appears when using Gamkrelidze's result by considering
the example of $\bfOmega_n$ being the $2$-dimensional vector consisting of a normal distribution
with mean $-1$ and variance $n$ and a normal distribution with mean $0$ and
variance $n$. This is a consequence of the linearity of the
operator $L$ in Gamkrelidze's result.

\section{Examples of Multidimensional Central Limit
  Theorems}\label{sec:exampl-mult-centr}
In this section, we give two examples from combinatorics where we can
apply Theorem~\ref{th:quasi-power-dD}. Asymptotic normality was
already shown in earlier publications
\cite{Drmota:1997:system-funct-equat,Bender-Richmond:1983:centr}, but
we additionally provide an estimate for the
speed of convergence.
\subsection{Context-Free Languages}
Consider the following example of a context-free grammar $G$ with
non-terminal symbols $S$ and $T$, terminal symbols $\{a,b,c\}$,
starting symbol $S$ and the rules 
\begin{equation*}
  P=\{S\to aSbS,\, S\to bT,\, T\to bS,\, T\to cT,\, T\to a\}.
\end{equation*}
The corresponding context-free language $L(G)$ consists of all words which
can be generated starting with $S$ using the rules in $P$ to replace
all non-terminal symbols. For example, $abcabababba\in L(G)$ because it can be derived as
\begin{equation*}
  S\to aSbS
  \to abTbaSbS
\to abcTbabTbbT
\to abcabababba.
\end{equation*}

Let $\P(\bfOmega_{n}=\bfx)$ be the probability that a word of length $n$
in $L(G)$ consists of $x_{1}$ and $x_{2}$ terminal symbols
$a$ and $b$, respectively. Thus there are $n-x_{1}-x_{2}$ terminal
symbols $c$. For simplicity, this random variable is only
$2$-dimensional. But it can be easily
extended to higher dimensions.

Following
Drmota~\cite[Sec.~3.2]{Drmota:1997:system-funct-equat}, we obtain that the
moment generating function is
\begin{equation*}
  \E(e^{\langle
    \bfOmega_n,\bfs\rangle})=\frac{y_{n}(e^{\bfs})}{y_{n}(\bfone)}
\end{equation*}
with $y_{n}(\boldsymbol{z})$ defined
in \cite{Drmota:1997:system-funct-equat}. Using
\cite[Equ.~(4.9)]{Drmota:1997:system-funct-equat}, this moment
generating function has an asymptotic expansion as in \eqref{eq:moment-asymp} with $\phi_{n}=n$. Thus
$\bfOmega_{n}$ is asymptotically normally distributed after
standardisation (as was shown in~\cite{Drmota:1997:system-funct-equat})
and additionally the speed of convergence is $O(n^{-1/2})$.

Other context-free languages can be analysed in the same way, either
by directly using the results in \cite{Drmota:1997:system-funct-equat} (if the
underlying system is strongly connected) or by similar methods. This
has applications, for example, in genetics (see~\cite{Poznanovic-Heitsch:2014:asymp-rna}).

\subsection{Dissections of Labelled Convex Polygons}
Let $S_{1}\cupdot\cdots\cupdot S_{t+1}=\{3,4,\ldots\}$ be a
partition. We dissect a labelled convex $n$-gon into smaller convex polygons by
choosing some non-intersecting diagonals. Each small polygon should be a
$k$-gon with $k\not\in S_{t+1}$. Define $a_{n}(\bfr)$ to be the number
of dissections of an $n$-gon such that it consists of exactly $r_{i}$ small
polygons whose number of vertices is in $S_{i}$, for $i=1$, \dots,
$t$. For convenience, we use $a_{2}(\bfr)=[\bfr=\bfzero]$. Asymptotic normality was proved in
\cite[Sec.~3]{Bender-Richmond:1983:centr}, see also
\cite[Ex.~7.1]{Bender:1974:asymp-method-enumer} for a one-dimensional
version. We additionally provide an estimate for the speed of convergence.

Let
\begin{equation*}
  f(z,\bfx)=\sum_{\substack{n\geq2\\ \bfr\geq 0}}a_{n}(\bfr)\bfx^{\bfr}z^{n-1}.
\end{equation*}
Then choosing a $k$-gon with $k\in S_{1}\cupdot\cdots\cupdot S_{t}$
and gluing dissected polygons to $k-1$ of its sides translates into
the equation
\begin{equation*}
  f=z+\sum_{i=1}^{t}x_{i}\sum_{k\in S_{i}}f^{k-1}.
\end{equation*}
Following \cite{Bender:1974:asymp-method-enumer}, this equation can be used to obtain an
asymptotic expression for the moment generating function as in
\eqref{eq:moment-asymp} with $\phi_{n}=n$. The asymptotic normal
distribution follows after suitable standardisation with speed of
convergence $O(n^{-1/2})$.

\section{Combinatorial Background of the Operator \texorpdfstring{$\Lambda$}{Λ}}\label{sec:operator-Lambda}
Before we start with the proof of Theorem~\ref{theorem:Berry-Esseen-dimension-m}, we
state and prove the property of our operator $\Lambda$
 which motivates its Definition~\ref{definition:Lambda-K}.

\begin{lemma}\label{lemma:Lambda-property}
  Let $K\subsetneq L$ and $h\colon \C^L\to \C$ with $h(\bfzero)=1$. Then
  \begin{equation*}
    \Lambda(h)\circ \psi_{K, L}=0.
  \end{equation*}
\end{lemma}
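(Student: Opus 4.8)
The plan is to compute $\Lambda(h)\circ\psi_{K,L}$ directly from the definition and show that all terms cancel by a Möbius-inversion argument. First I would note that for any part $J$ of a partition $\alpha\in\Pi_L$, the composition $\psi_{J,L}\circ\psi_{K,L}=\psi_{J\cap K,L}$, because both maps just zero out a set of coordinates and the effect of composing them is to zero out the union $L\setminus(J\cap K)$. Hence, writing $\bfs$ for a generic argument,
\begin{equation*}
  \Lambda(h)(\psi_{K,L}(\bfs))=\sum_{\alpha\in\Pi_L}\mu_\alpha\prod_{J\in\alpha}h(\psi_{J\cap K,L}(\bfs)).
\end{equation*}
Now I group the partitions $\alpha$ of $L$ according to the data they induce on the blocks: each $J\in\alpha$ contributes a factor depending only on $J\cap K$. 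The key observation is that $h(\bfzero)=1$, so any block $J$ with $J\cap K=\emptyset$ contributes the trivial factor $h(\psi_{\emptyset,L}(\bfs))=h(\bfzero)=1$ and can be ignored in the product. Therefore the product depends only on the collection $\{J\cap K : J\in\alpha,\ J\cap K\neq\emptyset\}$, which is a partition $\beta$ of $K$ together with the partition $\gamma$ of $L\setminus K$ induced by restricting $\alpha$ — except that a single block $J$ of $\alpha$ can straddle $K$ and $L\setminus K$, so the bookkeeping must be done carefully.

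The clean way to organise this is to fix the partition $\beta\in\Pi_K$ of $K$ that a partition $\alpha$ of $L$ induces via $J\mapsto J\cap K$ (discarding empty intersections), and to sum $\mu_\alpha$ over all $\alpha\in\Pi_L$ mapping to $\beta$. The claim is that for each fixed $\beta\in\Pi_K$ this inner sum vanishes, which immediately gives $\Lambda(h)\circ\psi_{K,L}=0$. To see the vanishing, I would recognise the map $\alpha\mapsto\beta$ as the restriction (contraction) map on the partition lattice and invoke Weisner's theorem — exactly as the introduction signals — since $\mu_\alpha=\mu(\alpha,\hat 1)$ is the value of the Möbius function of $\Pi_L$ at the top element $\hat 1=\{L\}$. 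Concretely, since $K\subsetneq L$ there is an element $\ell_0\in L\setminus K$; Weisner's theorem applied to the atom "the partition whose only nonsingleton block is... " — more precisely, to a suitable element $x\in\Pi_L$ with $x\neq\hat 1$ that is constant on the fibre of $\alpha\mapsto\beta$ — yields $\sum_{\alpha:\ \alpha\vee x=\hat 1}\mu(\alpha,\hat 1)=0$, and one checks the fibre over $\beta$ is precisely such a set. An alternative, more self-contained route is to pick $\ell_0\in L\setminus K$ and pair up each $\alpha$ mapping to $\beta$ with the partition $\alpha'$ obtained by toggling whether $\{\ell_0\}$ is its own block or is merged into the block containing the rest of $L\setminus K$; a standard sign-reversing involution / telescoping of $(-1)^{|\alpha|-1}(|\alpha|-1)!$ across this family then forces the sum to zero.

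The main obstacle I anticipate is the combinatorial bookkeeping in the second step: a block $J$ of $\alpha$ need not lie inside $K$ or inside $L\setminus K$, so the fibre of the restriction map $\alpha\mapsto\beta$ is not simply a product of smaller partition lattices, and one has to be careful that the weights $\mu_\alpha$ reassemble correctly. Getting the cancellation to come out cleanly — rather than for instance accidentally picking up boundary terms from blocks straddling $K$ — is where the Weisner-theorem formulation earns its keep, since it handles exactly this kind of "sum of Möbius values over a coset-like set" uniformly. Once the fibrewise vanishing is established, the conclusion is immediate: $\Lambda(h)(\psi_{K,L}(\bfs))=\sum_{\beta\in\Pi_K}\bigl(\sum_{\alpha\mapsto\beta}\mu_\alpha\bigr)\prod_{B\in\beta}h(\psi_{B,L}(\bfs))=0$ for all $\bfs$.
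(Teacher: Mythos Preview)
Your plan is essentially the paper's proof: reduce via $\psi_{J,L}\circ\psi_{K,L}=\psi_{J\cap K,L}$, use $h(\bfzero)=1$ to drop blocks disjoint from $K$, group the sum over $\Pi_L$ according to the partition of $K$ it induces, and kill each inner sum with Weisner's theorem. The paper makes the last step clean by introducing the specific element $\beta_0=\{K\}\cup\{\{k\}:k\in L\setminus K\}\in\Pi_L$ and noting that your fibre over a fixed $\beta\in\Pi_K$ is exactly $\{\alpha\in\Pi_L:\alpha\land\beta_0=\gamma\}$ with $\gamma=\beta\cup\{\{k\}:k\in L\setminus K\}$; since $\gamma\le\beta_0<\{L\}$, Weisner in the form $\sum_{\alpha:\alpha\land\beta_0=\gamma}\mu(\alpha,\{L\})=0$ applies directly.

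Two points in your writeup need tightening. First, you phrase Weisner as $\alpha\vee x=\hat1$, but the fibre of the restriction map is \emph{not} of that form: for instance when $\abs{K}=1$ the fibre is all of $\Pi_L$, which is never $\{\alpha:\alpha\vee x=\hat1\}$ for $x\neq\hat1$ (the bottom element $\hat0$ fails the condition). The correct formulation is the meet condition above, and identifying the partner $\beta_0$ is precisely the bookkeeping step you flagged as the obstacle. Second, your alternative sign-reversing involution does not work as stated: when $\{\ell_0\}$ is already a singleton block of $\alpha$ there is no canonical ``block containing the rest of $L\setminus K$'' to merge it into, so the toggle is ill-defined. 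Once you switch to the $\land$-version of Weisner with $\beta_0$, the proof goes through exactly as you outlined.
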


Before actually proving the lemma, we recall some of the theory about the Möbius
function of a partially ordered set (poset), see also
Stanley~\cite[Section~3.7]{Stanley:2012:enumer_1}.

By the following definition, $\Pi_L$, the set of all partitions of
$L$, is a poset:
As usual, a partition $\alpha\in \Pi_L$ is said to be a refinement of a partition
$\alpha'\in\Pi_L$ if
\begin{equation*}
  \forall J\in\alpha\colon \exists J'\in\alpha'\colon J\subseteq J'.
\end{equation*}
In this case, we write $\alpha\le \alpha'$. This defines a partial order on
$\Pi_L$.

The Möbius function on $\Pi_L$ is denoted by $\mu$: for $\alpha<
\alpha'$, we set $\mu(\alpha', \alpha')=1$ and
\begin{equation*}
  \mu(\alpha, \alpha')=
    -\sum_{\substack{\beta\in \Pi_L\\\alpha<\beta\le\alpha'}} \mu(\beta,
    \alpha').
\end{equation*}
For $\alpha$, $\alpha'\in\Pi_L$, the infimum $\alpha\land \alpha'$ of $\alpha$
and $\alpha'$ is given by
\begin{equation*}
  \{ J\cap J'\colon J\in\alpha, J'\in\alpha', J\cap J'\neq \emptyset\}.
\end{equation*}
In fact, $\Pi_L$ is a lattice (cf.\
Stanley~\cite[Example~3.10.4]{Stanley:2012:enumer_1}).
The greatest element is $\{L\}$.

For $\alpha\in\Pi_L$, we have
\begin{equation*}
  \mu(\alpha, \{L\})=(-1)^{\abs{\alpha}-1}(\abs{\alpha}-1)!=\mu_\alpha,
\end{equation*}
where $\abs{\alpha}$ denotes the number of parts of the partition,
see Stanley~\cite[(3.37)]{Stanley:2012:enumer_1}. In particular, we may
rewrite the definition of $\Lambda$ (Definition~\ref{definition:Lambda-K}) as
\begin{equation}\label{eq:Lambda_K_definition-2}
  \Lambda(h):=\sum_{\alpha\in\Pi_L}\mu(\alpha, \{L\}) \prod_{J\in
      \alpha}h\circ \psi_{J, L}.
\end{equation}

For any $\gamma$, $\beta\in\Pi_L$ with $\gamma\le \beta< \{L\}$,
Weisner's theorem (see Stanley~\cite[Corollary~3.9.3]{Stanley:2012:enumer_1})
applied to the interval $[\gamma, \{L\}]$ asserts that
\begin{equation}\label{eq:Weisner}
  \sum_{\substack{\alpha\in\Pi_L\\ \alpha\land \beta=\gamma}}\mu(\alpha, \{L\})=0.
\end{equation}

We now turn to the actual proof of the lemma. 
\begin{proof}[Proof of Lemma~\ref{lemma:Lambda-property}.]
Consider the partition
$\beta=\{K\} \cup
\{\{k\}\colon k\in L\setminus K\}$ of $L$, i.e., $\beta$ consists of $K$ as one
part and a collection of singletons. As $K\neq L$, we have $\beta < \{L\}$.

By definition of $\psi$, we have
$\psi_{J,L}\circ \psi_{K,L}=\psi_{J\cap K, L}$ for $J$, $K\subseteq L$. If
$\alpha\in \Pi_L$, then
\begin{equation*}
  \prod_{J\in \alpha} h\circ \psi_{J\cap K, L}=
  \prod_{\substack{J\in
      \alpha\land\beta\\
    J\subseteq K}} h\circ \psi_{J, L}
\end{equation*}
because parts $J\in\alpha$ with $J\cap K=\emptyset$ contribute $h(\bfzero)=1$.
Therefore, collecting the sum \eqref{eq:Lambda_K_definition-2} according to
$\alpha\land\beta$ yields
\begin{equation*}
  \Lambda(h)\circ\psi_{K, L}:=\sum_{\alpha\in\Pi_L}\mu(\alpha, \{L\}) \prod_{J\in
      \alpha}h\circ \psi_{J\cap K, L}=
    \sum_{\gamma\in\Pi_L}\prod_{\substack{J\in
      \gamma\\
    J\subseteq K}} h\circ \psi_{J, L}
\sum_{\substack{\alpha\in\Pi_L\\\alpha\land\beta=\gamma}}\mu(\alpha, \{L\}).
\end{equation*}
As $\gamma\le\beta<\{L\}$, the inner sum vanishes by~\eqref{eq:Weisner}.
\end{proof}

\section{Proof of the Berry--Esseen Inequality}\label{sec:proof-Berry--Esseen}
This section is devoted to the proof of our Berry--Esseen inequality,
Theorem~\ref{theorem:Berry-Esseen-dimension-m}. It is a generalisation of
Sadikova's proof.

We start with an auxiliary one-dimensional random variable.
\begin{lemma}\label{lemma:P}
  Let $P$ be the one-dimensional random variable with probability density
  function
  \begin{equation*}
    f_P(z)=\frac{3}{8\pi}\Bigl(\frac{\sin(z/4)}{z/4}\Bigr)^4.
  \end{equation*}

  Then its characteristic function is
  \begin{equation}\label{eq:characteristic-function-P}
    \varphi_P(t)=
    \begin{cases}
      1-6t^2+6\abs{t}^3&\text{if $0\le \abs{t}\le 1/2$},\\
      2(1-\abs{t})^3&\text{if $1/2\le \abs{t}\le 1$},\\
      0&\text{if $1\le \abs{t}$}
    \end{cases}
  \end{equation}
  and
  \begin{align}
    \E(P^2)&=12,\notag\\
    \E(\abs{P})&\le C_2.\label{eq:expectation-P-absolute-value}
  \end{align}

  Let $\lambda$ be the unique positive number such that
  \begin{equation*}
    \P(P\le \lambda) = \P(P\ge -\lambda) = \Bigl(\frac{3}{4}\Bigr)^{1/m}.
  \end{equation*}
  Then
  \begin{equation}\label{eq:lambda-estimate}
    \lambda\le C_1.
  \end{equation}
\end{lemma}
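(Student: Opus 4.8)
The plan is to establish the four claims of Lemma~\ref{lemma:P} in order; everything rests on computing the characteristic function $\varphi_P$, after which the moment identities and the bound on $\lambda$ follow quickly. First I would compute $\varphi_P$ and simultaneously check that $f_P$ is a probability density. The characteristic function of a random variable uniform on $[-1/4,1/4]$ is $t\mapsto\sin(t/4)/(t/4)$, so $\bigl(\sin(t/4)/(t/4)\bigr)^4$ is the characteristic function of $V=U_1+U_2+U_3+U_4$ with $U_1,\dots,U_4$ independent and uniform on $[-1/4,1/4]$. The density $g$ of $V$ is supported on $[-1,1]$ and is the classical $C^2$ piecewise-cubic B-spline; evaluating the Irwin--Hall convolution elementarily gives $g(0)=4/3$, $\tfrac34 g(t)=1-6t^2+6t^3$ for $0\le t\le 1/2$, $\tfrac34 g(t)=2(1-t)^3$ for $1/2\le t\le1$, and $g$ even. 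With $\widehat h(t):=\int_{\R}e^{itz}h(z)\,dz$ we have $f_P=\tfrac{3}{8\pi}\widehat g$. Since $g\in L^1$ and $\widehat g(t)=O(t^{-4})\in L^1$, Fourier inversion applies and gives $\widehat{\widehat g}(t)=2\pi g(-t)=2\pi g(t)$. Hence $f_P\ge0$ and $\int_{\R}f_P=\widehat{f_P}(0)=\tfrac{3}{8\pi}\widehat{\widehat g}(0)=\tfrac34 g(0)=1$, so $f_P$ is a probability density; and $\varphi_P(t)=\widehat{f_P}(t)=\tfrac{3}{8\pi}\widehat{\widehat g}(t)=\tfrac34 g(t)$, which is exactly \eqref{eq:characteristic-function-P}. (Alternatively, \eqref{eq:characteristic-function-P} can be verified directly: the stated $\varphi_P$ is $C^2$, supported on $[-1,1]$, with $\varphi_P'''$ jumping only at $\pm1/2$ and $\pm1$; three integrations by parts in $\tfrac{1}{2\pi}\int_{-1}^{1}e^{-itz}\varphi_P(t)\,dt$ collapse it to a constant times $z^{-4}\bigl(6-8\cos(z/2)+2\cos z\bigr)=16z^{-4}\sin^4(z/4)$, which equals $f_P$ once the constant is matched.)

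Next, the moments. Since $f_P(z)=O(z^{-4})$, we have $\E(P^2)<\infty$ and $\E(P^2)=-\varphi_P''(0)$; near the origin $\varphi_P(t)=1-6t^2+6\abs{t}^3$, and $6\abs{t}^3$ is twice differentiable at $0$ with vanishing second derivative there, so $\E(P^2)=12$. For $\E(\abs{P})$ I would use $\abs{z}=\tfrac{2}{\pi}\int_0^\infty\frac{1-\cos(zt)}{t^2}\,dt$; taking expectations (Tonelli, nonnegative integrand) and using that $P$ is symmetric, so $\varphi_P$ is real, and supported on $[-1,1]$,
\[
  \E(\abs{P})=\frac{2}{\pi}\int_0^\infty\frac{1-\varphi_P(t)}{t^2}\,dt
  =\frac{2}{\pi}\Bigl(\int_0^{1/2}(6-6t)\,dt+\int_{1/2}^{1}\frac{1-2(1-t)^3}{t^2}\,dt+\int_1^\infty\frac{dt}{t^2}\Bigr).
\]
Bounding the middle integrand by $t^{-2}$ (as $1-2(1-t)^3\le1$) gives $\E(\abs{P})\le\tfrac{2}{\pi}(3+1+1)=\tfrac{10}{\pi}<\tfrac{12}{\pi}=C_2$, i.e.\ \eqref{eq:expectation-P-absolute-value}; the exact value is $\tfrac{12\ln2}{\pi}$.

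Finally, the bound on $\lambda$. Since $f_P$ is continuous, even, and positive off a discrete set, $F_P$ is continuous, strictly increasing, and $F_P(0)=\tfrac12$; as $(3/4)^{1/m}\in(\tfrac12,1)$ for every $m\ge1$, $\lambda$ exists, is unique, and is positive. From $\abs{\sin(z/4)}\le1$ I get the crude bound $f_P(z)\le\tfrac{3}{8\pi}(4/\abs{z})^4=\tfrac{96}{\pi z^4}$, hence for $t>0$,
\[
  \P(\abs{P}>t)=2\int_t^\infty f_P(z)\,dz\le\frac{192}{\pi}\int_t^\infty z^{-4}\,dz=\frac{64}{\pi t^3}.
\]
By symmetry and the definition of $\lambda$, $\P(\abs{P}>\lambda)=2\bigl(1-(3/4)^{1/m}\bigr)$, so $2\bigl(1-(3/4)^{1/m}\bigr)\le\tfrac{64}{\pi\lambda^3}$, i.e.\ $\lambda^3\le\tfrac{32}{\pi(1-(3/4)^{1/m})}$, which is $\lambda\le C_1$ and proves \eqref{eq:lambda-estimate}.

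I expect the characteristic-function step to be the only real obstacle: one must correctly align $f_P$ with the Fourier transform of the four-fold uniform convolution (or, in the direct route, bookkeep the jumps of $\varphi_P'''$ at $\pm1/2$ and $\pm1$ and the ensuing trigonometric collapse to $\sin^4(z/4)$). Once $\varphi_P$ is known the remaining steps are routine; note for the last step that the weaker Chebyshev estimate $\lambda^2\le6/(1-(3/4)^{1/m})$ coming from $\E(P^2)=12$ has the wrong dependence on $m$, so the $z^{-4}$ tail of $f_P$ is genuinely needed.
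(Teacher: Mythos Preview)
Your proof is correct. For the characteristic function you supply a full Fourier-inversion argument (via the density of a sum of four independent uniforms on $[-1/4,1/4]$) where the paper simply cites Gnedenko--Kolmogorov and says the formula ``is computed by standard methods''; your derivation is clean and self-contained. For $\E(P^2)=12$ and for the tail bound yielding $\lambda\le C_1$ you do exactly what the paper does.

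The one genuine methodological difference is the bound $\E(\abs{P})\le C_2$. The paper works directly with the density: after the substitution $z\mapsto 4z$ it writes
\[
\E(\abs{P})=\frac{12}{\pi}\int_0^\infty\frac{\sin^4 z}{z^3}\,dz,
\]
splits at $z=1$, and uses $\sin z\le z$ on $[0,1]$ and $\abs{\sin z}\le 1$ on $[1,\infty)$ to bound each piece by $1/2$, landing exactly on $12/\pi$. You instead pass to the Fourier side via the identity $\abs{z}=\tfrac{2}{\pi}\int_0^\infty t^{-2}(1-\cos zt)\,dt$ and integrate $t^{-2}(1-\varphi_P(t))$. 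Both routes are short; the paper's is marginally more elementary (only crude bounds on $\sin$), while yours exploits the explicit piecewise form of $\varphi_P$ you have just computed and, as you note, even gives the exact value $\tfrac{12\ln 2}{\pi}$.

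One harmless arithmetic slip: $\int_0^{1/2}(6-6t)\,dt=\tfrac{9}{4}$, not $3$, so your displayed bound is really $\tfrac{2}{\pi}\bigl(\tfrac{9}{4}+1+1\bigr)=\tfrac{17}{2\pi}$, still well below $C_2=\tfrac{12}{\pi}$.
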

\begin{proof}
  The characteristic function~\eqref{eq:characteristic-function-P} is
  mentioned in \cite[Section~39]{Gnedenko-Kolmogorov:1954:limit}; it is computed
  by standard methods.

  Differentiating $\varphi_P$ twice, we see that the second moment is $12$. To
  prove \eqref{eq:expectation-P-absolute-value}, we rewrite
  $\E(\abs{P})$ as
  \begin{equation*}
    \E(\abs{P})
    =\frac{12}{\pi} \int_0^1  \frac{\sin^4 z}{z^3}\, dz
    +\frac{12}{\pi} \int_1^\infty  \frac{\sin^4 z}{z^3}\, dz.
  \end{equation*}
  We use the estimates $\sin z\le z$ and $\abs{\sin z}\le 1$ on the intervals
  $[0, 1]$ and $[1, \infty)$, respectively. Thus
  \begin{equation*}
    \E(\abs{P})\le \frac{12}{\pi}\Bigl(\frac12 + \frac12\Bigr)=\frac{12}{\pi}.
  \end{equation*}
  
  To obtain a bound for $\lambda$, we follow
  Gamkrelidze~\cite{Gamkrelidze:1977:englisch}: we estimate the tail using $\abs{\sin^4(z)}\le 1$
  and get
    \begin{equation*}
      1-\Bigl(\frac{3}{4}\Bigr)^{1/m} =\frac{3}{8\pi}\int_{\lambda}^\infty
      \Bigl(\frac{\sin(z/4)}{z/4}\Bigr)^4\, dz\le
      \frac{3}{2\pi}\int_{\lambda/4}^\infty\Bigl(\frac{1}{z}\Bigr)^4\,dz
      =
      \frac3{2\pi}\Bigl(-\frac13\Bigr)\frac{1}{z^3}\Bigr\rvert_{z=\lambda/4}^\infty
      =\frac{32}{\pi\lambda^3}.
    \end{equation*}
    This results in  \eqref{eq:lambda-estimate}.
\end{proof}

In the next step, we consider tuples of random variables distributed as $P$.
They will be used to ensure smoothness. We write $\bfone$ to denote a
vector with all coordinates equal to $1$.

\begin{lemma}\label{lemma:Q}
  Let $\bfQ=(P_1/T, \ldots, P_m/T)$ be the $m$-dimensional random variable
  where the $P_j$ are independent random variables with the same
  distribution as $P$
  in Lemma~\ref{lemma:P} and $T$ is the fixed constant defined in Theorem~\ref{theorem:Berry-Esseen-dimension-m}.

  Then $\bfQ$ has density function and characteristic function
  \begin{align*}
    f_\bfQ(\bfz)&=\prod_{j=1}^m Tf_P(Tz_j),\\
    \varphi_\bfQ(\bft)&=\prod_{j=1}^m\varphi_P\Bigl(\frac{t_j}{T}\Bigr),
  \end{align*}
  respectively. The characteristic function vanishes outside $[-T, T]^m$.

  Furthermore,
  \begin{align}
    \int_{\bfz\in\R^m} \abs{z_j} f_\bfQ\Bigl(\bfz + \frac{\theta
      \lambda}{T}\bfone\Bigr)\,d\bfz &\le \frac{C_2+\lambda}{T},\label{eq:Q-integral-1}\\
    \int_{\theta \bfz\le 0} f_\bfQ\Bigl(\bfz + \frac{\theta
      \lambda}{T} \bfone\Bigr)\,d\bfz=\frac{3}{4}\label{eq:Q-integral-2}
  \end{align}
  hold for $\theta\in\{\pm 1\}$ and $j\in\{1,\ldots, m\}$.
\end{lemma}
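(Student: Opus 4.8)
The plan is to reduce every assertion to a one-dimensional statement about $P$ and then invoke Lemma~\ref{lemma:P}. The formulas for $f_\bfQ$ and $\varphi_\bfQ$ are the standard facts that scaling a random variable by $1/T$ turns a density $f_P$ into $z\mapsto Tf_P(Tz)$ and a characteristic function $\varphi_P$ into $t\mapsto\varphi_P(t/T)$, together with the fact that for independent coordinates the joint density and the joint characteristic function are the products of the one-dimensional ones. Since $\varphi_P(t_j/T)=0$ whenever $\abs{t_j}>T$ by \eqref{eq:characteristic-function-P}, the product $\varphi_\bfQ$ vanishes as soon as a single coordinate leaves $[-T,T]$, which is the claimed support property.

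For \eqref{eq:Q-integral-1} I would first substitute $\bfz\mapsto\bfz-\frac{\theta\lambda}{T}\bfone$ to remove the shift, turning the left-hand side into $\int_{\R^m}\abs{z_j-\theta\lambda/T}\,f_\bfQ(\bfz)\,d\bfz$. Writing $f_\bfQ(\bfz)=\prod_{\ell}Tf_P(Tz_\ell)$ and using that each one-dimensional factor integrates to $1$, all coordinates but the $j$-th integrate out, leaving $\int_\R\abs{z_j-\theta\lambda/T}\,Tf_P(Tz_j)\,dz_j$. After the substitution $w=Tz_j$ and the estimate $\abs{w-\theta\lambda}\le\abs{w}+\lambda$ (valid since $\theta\in\{\pm1\}$), this is at most $\frac1T\bigl(\E\abs{P}+\lambda\bigr)\le\frac1T(C_2+\lambda)$ by \eqref{eq:expectation-P-absolute-value}.

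For \eqref{eq:Q-integral-2} the same substitution $\bfz\mapsto\bfz-\frac{\theta\lambda}{T}\bfone$ transforms the domain $\theta\bfz\le\bfzero$ into $\{\bfz:\theta z_\ell\le\lambda/T\text{ for all }\ell\}$, using $\theta^2=1$. This domain is a product of half-lines, so the integral factors as $\prod_{\ell=1}^m\int_{\theta z_\ell\le\lambda/T}Tf_P(Tz_\ell)\,dz_\ell$, which after $w=Tz_\ell$ equals $\prod_{\ell=1}^m\P(\theta P\le\lambda)$. For $\theta=1$ each factor is $\P(P\le\lambda)=(3/4)^{1/m}$ and for $\theta=-1$ each factor is $\P(P\ge-\lambda)=(3/4)^{1/m}$, both by the definition of $\lambda$ in Lemma~\ref{lemma:P}; the $m$-fold product is therefore exactly $3/4$.

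There is no real obstacle here: the proof is a change-of-variables computation resting on the product structure of $\bfQ$. The only point needing care is the bookkeeping around the shift and the sign $\theta$ — one must check that after the translation the domain $\theta\bfz\le\bfzero$ really becomes $\theta z_\ell\le\lambda/T$ and that both cases $\theta=\pm1$ land precisely on the normalising condition defining $\lambda$, so that the dimension-dependent exponent $1/m$ combines over the $m$ factors to give $3/4$.
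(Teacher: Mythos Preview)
Your proof is correct and follows essentially the same approach as the paper: both arguments use the product structure from independence, the scaling $P_j/T$, the translation $\bfz\mapsto\bfz-\frac{\theta\lambda}{T}\bfone$ to remove the shift, the triangle inequality together with \eqref{eq:expectation-P-absolute-value} for \eqref{eq:Q-integral-1}, and the definition of $\lambda$ for \eqref{eq:Q-integral-2}. The only difference is that you spell out the one-dimensional substitutions explicitly, whereas the paper phrases the translated integrals directly as $\E\bigl(\abs{Q_j-\theta\lambda/T}\bigr)$ and $\P\bigl(\theta\bfQ\le\frac{\lambda}{T}\bfone\bigr)$.
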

\begin{proof}
  Because of independence, the distribution function and
  the characteristic function of $\bfQ$ is the product of the distribution
  functions and the characteristic functions of the $P_j/T$, respectively. Division by $T$ transforms the density and
  characteristic functions as claimed. As $\varphi_P(t)$ vanishes outside $[-1,
  1]$ by \eqref{eq:characteristic-function-P}, $\varphi_\bfQ(\bft)$ vanishes
  outside $[-T, T]^m$.

  By a simple translation, the integral on the left hand side of
  \eqref{eq:Q-integral-1} can be seen to be equal to
  \begin{equation*}
    \E\Bigl(\abs[\Big]{Q_j-\frac{\theta\lambda}{T}}\Bigr).
  \end{equation*}
  Then \eqref{eq:Q-integral-1} is a simple consequence of $Q_j=P_j/T$,
  \eqref{eq:expectation-P-absolute-value} and the triangle inequality.

  By the same translation and the definition of $\lambda$, the integral on the left hand side
  of~\eqref{eq:Q-integral-2} is
  \begin{equation*}
    \P\Bigl(\theta\bfQ\le \frac{\lambda}{T}\bfone\Bigr)=\prod_{j=1}^m \P(\theta
    P_j\le \lambda)=\frac{3}{4}.
  \end{equation*}
\end{proof}

From now on, we let $\bfQ$ be as in Lemma~\ref{lemma:Q} and let $\bfQ$ be
independent of $\bfX$ and independent of $\bfY$. We first prove an inequality
relating the difference between the distribution functions of $\bfX$ and $\bfY$
to that of the distribution functions of $\bfX+\bfQ$ and $\bfY+\bfQ$.

\begin{lemma}\label{lemma:perturbation}We have
  \begin{equation}\label{eq:perturbation}
    \begin{aligned}
      \sup_{\bfz\in\R^m}\abs{F_{\bfX+\bfQ}(\bfz)-F_{\bfY+\bfQ}(\bfz)}&\le
      \sup_{\bfz\in\R^m}\abs{F_\bfX(\bfz)-F_{\bfY}(\bfz)}\\
      &\le
      2\sup_{\bfz\in\R^m}\abs{F_{\bfX+\bfQ}(\bfz)-F_{\bfY+\bfQ}(\bfz)}+\frac{2\sum_{j=1}^m
        A_j}{T}(C_1+C_2).
    \end{aligned}
  \end{equation}
\end{lemma}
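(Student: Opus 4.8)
Both inequalities in \eqref{eq:perturbation} are instances of the same smoothing idea, going back to Sadikova and ultimately to Esseen's smoothing lemma. The key tool is that $\bfQ$ is an independent ``small'' perturbation: its characteristic function has compact support (so the integral in Theorem~\ref{theorem:Berry-Esseen-dimension-m} over $\norm{\bft}\le T$ really only sees $\varphi_{\bfX+\bfQ}$, $\varphi_{\bfY+\bfQ}$), but $\bfQ$ itself is not ``too spread out'', as quantified by \eqref{eq:Q-integral-1} and \eqref{eq:Q-integral-2}. I will prove the two inequalities separately.

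For the first (left) inequality, the plan is to write, using independence of $\bfQ$ from both $\bfX$ and $\bfY$,
\begin{equation*}
  F_{\bfX+\bfQ}(\bfz)-F_{\bfY+\bfQ}(\bfz)
  = \int_{\R^m}\bigl(F_\bfX(\bfz-\bfw)-F_\bfY(\bfz-\bfw)\bigr)\,dF_\bfQ(\bfw),
\end{equation*}
so that the left-hand side is bounded by $\sup_{\bfz}\abs{F_\bfX(\bfz)-F_\bfY(\bfz)}$ times $\int dF_\bfQ = 1$. This is immediate and needs no property of $\bfQ$ beyond being a probability measure.

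For the second (right) inequality, I would fix $\bfz$ and $\theta\in\{\pm1\}$ chosen according to the sign of $F_\bfX(\bfz)-F_\bfY(\bfz)$; by symmetry assume $F_\bfX(\bfz)-F_\bfY(\bfz)>0$ and take $\theta=+1$ (the other case is handled by replacing $\bfX,\bfY$ by their reflections). The idea is to shift the evaluation point by $\tfrac{\lambda}{T}\bfone$ and exploit monotonicity of distribution functions: for the perturbed variables, $F_{\bfX+\bfQ}\bigl(\bfz+\tfrac{\lambda}{T}\bfone\bigr)\ge \P\bigl(\bfX\le\bfz,\ \bfQ\le\tfrac{\lambda}{T}\bfone\bigr)=F_\bfX(\bfz)\cdot\tfrac{3}{4}$ by independence and \eqref{eq:Q-integral-2}, while $F_{\bfY+\bfQ}\bigl(\bfz+\tfrac{\lambda}{T}\bfone\bigr)\le F_\bfY(\bfz) + \bigl(F_\bfY(\bfz+\tfrac{\lambda}{T}\bfone)-F_\bfY(\bfz)\bigr)\cdot 1 + \text{(contribution where $\bfQ$ moves $\bfz$ further)}$. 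More carefully, conditioning on $\bfQ=\bfw$ and splitting on the sign of the coordinates of $\bfw-\tfrac{\lambda}{T}\bfone$, I bound $F_\bfY\bigl(\bfz+\tfrac{\lambda}{T}\bfone-\bfw\bigr)-F_\bfY(\bfz)$ by a telescoping sum of one-coordinate increments, each controlled by $A_j$ times the corresponding $\abs{w_j-\lambda/T}$; integrating against $f_\bfQ\bigl(\cdot+\tfrac{\lambda}{T}\bfone\bigr)$ and using \eqref{eq:Q-integral-1} produces the term $\tfrac{\sum_j A_j}{T}(C_2+\lambda)\le \tfrac{\sum_j A_j}{T}(C_1+C_2)$ by \eqref{eq:lambda-estimate}. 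Combining, $F_\bfX(\bfz)-F_\bfY(\bfz) \le \tfrac{4}{3}\bigl(F_{\bfX+\bfQ}(\bfz+\tfrac{\lambda}{T}\bfone)-F_{\bfY+\bfQ}(\bfz+\tfrac{\lambda}{T}\bfone)\bigr) + \tfrac{4}{3}\cdot\tfrac{\sum_j A_j}{T}(C_1+C_2) + (\text{small})$; taking the supremum over $\bfz$ and absorbing the constant $\tfrac43 < 2$ gives \eqref{eq:perturbation}. (One has to be a little careful that the shift $\bfz\mapsto\bfz+\tfrac{\lambda}{T}\bfone$ only moves inside the sup, which is harmless.)

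**Main obstacle.** The routine-looking but genuinely fiddly part is the second inequality: correctly organizing the multidimensional ``sign splitting'' of $\bfw-\tfrac{\lambda}{T}\bfone$ so that the error from replacing $F_\bfY(\bfz+\tfrac{\lambda}{T}\bfone-\bfw)$ by $F_\bfY(\bfz)$ telescopes coordinatewise into $\sum_j A_j \int\abs{w_j}\,dF_\bfQ(\bfw+\tfrac{\lambda}{T}\bfone)$ without picking up extra dimension-dependent factors, and simultaneously getting the lower bound $\tfrac34 F_\bfX(\bfz)$ right from \eqref{eq:Q-integral-2}. The use of $\bfone$ (shifting \emph{all} coordinates by the same $\lambda/T$) rather than coordinate-dependent shifts is what makes \eqref{eq:Q-integral-2} factor as a product of identical one-dimensional probabilities $(3/4)^{1/m}$, which is precisely why $\lambda$ was defined with that exponent in Lemma~\ref{lemma:P}; keeping this bookkeeping consistent is the crux.
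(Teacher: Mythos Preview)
Your treatment of the first inequality is correct and identical to the paper's: the convolution representation
\[
  (F_{\bfX+\bfQ}-F_{\bfY+\bfQ})(\bfz)=\int_{\R^m}(F_\bfX-F_\bfY)(\bfz-\bfw)\,f_\bfQ(\bfw)\,d\bfw
\]
immediately gives $S'\le S$.

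The argument for the second inequality, however, has a genuine gap. Your two separate bounds
\[
  F_{\bfX+\bfQ}(\bfz')\ge\tfrac34\, F_\bfX(\bfz),\qquad
  F_{\bfY+\bfQ}(\bfz')\le F_\bfY(\bfz)+E
\]
(with $\bfz'=\bfz+\tfrac{\lambda}{T}\bfone$ and $E=\tfrac{C_2+\lambda}{T}\sum_j A_j$) do \emph{not} combine to the inequality you claim. From the first you get $F_\bfX(\bfz)\le\tfrac{4}{3}F_{\bfX+\bfQ}(\bfz')$; subtracting the second yields only
\[
  F_\bfX(\bfz)-F_\bfY(\bfz)\le \tfrac{4}{3}F_{\bfX+\bfQ}(\bfz')-F_{\bfY+\bfQ}(\bfz')+E
  =\tfrac{4}{3}\bigl(F_{\bfX+\bfQ}(\bfz')-F_{\bfY+\bfQ}(\bfz')\bigr)+\tfrac{1}{3}F_{\bfY+\bfQ}(\bfz')+E,
\]
and the extra term $\tfrac{1}{3}F_{\bfY+\bfQ}(\bfz')$ can be as large as $\tfrac13$; it is certainly not the ``(small)'' you invoke. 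The trouble is that the factor $\tfrac34$ from \eqref{eq:Q-integral-2} multiplies $F_\bfX(\bfz)$ alone rather than the difference $F_\bfX(\bfz)-F_\bfY(\bfz)$.

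The fix (and the paper's approach) is to keep $F_\bfX-F_\bfY$ together inside the convolution integral throughout. Writing $S=\sup_{\bfz}\abs{F_\bfX(\bfz)-F_\bfY(\bfz)}$, pick $\theta\in\{\pm1\}$ and $\bfz_\varepsilon$ with $\theta(F_\bfX-F_\bfY)(\bfz_\varepsilon)\ge S-\varepsilon$. On the region $\{\theta\bfw\le\bfzero\}$ (which has mass $\tfrac34$ under the shifted density $f_\bfQ(\,\cdot+\tfrac{\theta\lambda}{T}\bfone)$ by \eqref{eq:Q-integral-2}), monotonicity of $F_\bfX$ gives $\theta F_\bfX(\bfz_\varepsilon-\bfw)\ge\theta F_\bfX(\bfz_\varepsilon)$, while the Lipschitz estimate for $F_\bfY$ gives $-\theta F_\bfY(\bfz_\varepsilon-\bfw)\ge -\theta F_\bfY(\bfz_\varepsilon)-\sum_j A_j\abs{w_j}$. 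Hence the \emph{difference} satisfies $\theta(F_\bfX-F_\bfY)(\bfz_\varepsilon-\bfw)\ge (S-\varepsilon)-\sum_j A_j\abs{w_j}$ there, and integrating against the shifted density gives a contribution $\ge\tfrac34(S-\varepsilon)-E$ by \eqref{eq:Q-integral-1}. On the complementary region (mass $\tfrac14$), one simply uses $\theta(F_\bfX-F_\bfY)\ge -S$, contributing $\ge -\tfrac14 S$. Summing yields $S'\ge\tfrac12 S-E-\tfrac34\varepsilon$, hence $S\le 2S'+2E$ after letting $\varepsilon\to0$. Note that the constant $2$ (rather than your $\tfrac43$) is precisely $\bigl(\tfrac34-\tfrac14\bigr)^{-1}$: the bad-region loss $\tfrac14 S$ must be subtracted from the good-region gain $\tfrac34 S$, and this is only possible because both involve the same quantity $S$.
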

\begin{proof}
  Let
  \begin{align*}
    S&=\sup_{\bfz\in\R^m}\abs{F_\bfX(\bfz)-F_{\bfY}(\bfz)}\\
    S'&=\sup_{\bfz\in\R^m}\abs{F_{\bfX+\bfQ}(\bfz)-F_{\bfY+\bfQ}(\bfz)}
  \end{align*}
  and $\varepsilon>0$. We choose $\theta\in\{\pm 1\}$ such that $S=\sup_{z\in\R^m}\theta(F_{\bfX}(\bfz)-F_{\bfY}(\bfz))$.

  There is a $\bfz_{\varepsilon}\in\R^m$ such that
  \begin{equation*}
    S-\varepsilon \le \theta(F_{\bfX}-F_{\bfY})(\bfz_\varepsilon).
  \end{equation*}
  Let $\bfw\in\R^n$ with $\theta\bfw\le \bfzero$. By monotonicity of
  $F_{\bfX}$, we have $\theta F_{\bfX}(\bfz_\varepsilon-\bfw)\ge \theta
  F_{\bfX}(\bfz_\varepsilon)$. Thus
  \begin{align*}
    \theta(F_{\bfX}-F_{\bfY})(\bfz_\varepsilon-\bfw)&\ge\theta
    (F_{\bfX}-F_{\bfY})(\bfz_{\varepsilon})-\theta(F_{\bfY}(\bfz_\varepsilon-\bfw)-F_{\bfY}(\bfz_\varepsilon))\\
    &\ge S-\varepsilon - \sum_{j=1}^m A_j \abs{w_j}.
  \end{align*}
  We multiply this inequality by
  $f_{\bfQ}\bigl(\bfw+\frac{\theta\lambda}{T}\bfone\bigr)$ and integrate over all
  $\bfw\in\R^n$ with $\theta\bfw\le \bfzero$. By \eqref{eq:Q-integral-2} and
  \eqref{eq:Q-integral-1}, we get
  \begin{equation}\label{eq:integral-I_1}
    I_1:=\int_{\theta\bfw\le\bfzero}\theta(F_{\bfX}-F_{\bfY})(\bfz_\varepsilon-\bfw)f_{\bfQ}\Bigl(\bfw+\frac{\theta\lambda}{T}\bfone\Bigr)
    \,d\bfw \ge \frac{3}{4}(S-\varepsilon)-\frac{C_2+\lambda}{T}\sum_{j=1}^m A_j.
  \end{equation}
  Setting
  \begin{equation*}
    I_2:=\int_{\theta\bfw\nleq\bfzero}\theta(F_{\bfX}-F_{\bfY})(\bfz_\varepsilon-\bfw)f_{\bfQ}\Bigl(\bfw+\frac{\theta\lambda}{T}\bfone\Bigr)
    \,d\bfw
  \end{equation*}
  and using the estimate
  $\abs{\theta(F_{\bfX}-F_{\bfY})(\bfz_\varepsilon-\bfw)}\le S$ yields
  \begin{equation}\label{eq:integral-I_2}
    \abs{I_2}\le S \int_{\theta\bfw\nleq\bfzero}f_{\bfQ}\Bigl(\bfw+\frac{\theta\lambda}{T}\bfone\Bigr)
    \,d\bfw=\frac{S}{4}
  \end{equation}
  by \eqref{eq:Q-integral-2} and the fact that $f_{\bfQ}$ is a probability density function.

  Combining \eqref{eq:integral-I_1} and \eqref{eq:integral-I_2} yields
  \begin{equation}\label{eq:I_1+I_2-bound}
    \abs{I_1+I_2}\ge \abs{I_1}-\abs{I_2}\ge I_1-\abs{I_2}\ge \frac{S}{2}-\frac{C_2+\lambda}{T}\sum_{j=1}^m A_j-\frac{3\varepsilon}{4}.
  \end{equation}
  As the sum of random variables corresponds to a convolution, we have
  \begin{equation}\label{eq:convolution}
    (F_{\bfX+\bfQ}-F_{\bfY+\bfQ})(\bfz) =
    \int_{\R^m}(F_{\bfX}-F_{\bfY})(\bfz-\bfw) f_{\bfQ}(\bfw)\,d\bfw.
  \end{equation}
  Replacing $\bfz$ and $\bfw$ by $\bfz_{\varepsilon}+\frac{\theta\lambda}{T}\bfone$ and
  $\bfw+\frac{\theta\lambda}{T}\bfone$, respectively, and using
  \eqref{eq:I_1+I_2-bound} leads to
  \begin{equation*}
    S'\ge\abs[\Big]{(F_{\bfX+\bfQ}-F_{\bfY+\bfQ})\Bigl(\bfz_\varepsilon+\frac{\theta\lambda}{T}\bfone\Bigr)}=\abs{I_1+I_2}\ge\frac{S}{2}-\frac{C_2+\lambda}{T}\sum_{j=1}^m A_j-\frac{3\varepsilon}{4}
  \end{equation*}
  for all $\varepsilon>0$. Taking the
  limit for $\varepsilon\to 0$ and rearranging yields the right hand side of
  \eqref{eq:perturbation}.

  The left hand side of \eqref{eq:perturbation} is an immediate consequence of
  \eqref{eq:convolution}.
\end{proof}

We are now able to bound the difference of the distribution functions by their
characteristic functions. 

\begin{lemma}\label{le:distribution-sum-bound}We have
  \begin{multline}\label{eq:distribution-sum-bound}
    \sup_{\bfz\in\R^m}\abs[\bigg]{\sum_{\alpha\in\Pi_L}\mu_\alpha\biggl(\prod_{J\in\alpha}
    F_{\bfX_J+\bfQ_J}-\prod_{J\in\alpha}F_{\bfY_J+\bfQ_J}\biggr)(\bfz)}\\\le
  \frac1{(2\pi)^m}\int_{\norm{\bft}\le T}\abs[\bigg]{\frac{\Lambda(\varphi_{\bfX})(\bft)-\Lambda(\varphi_{\bfY})(\bft)}{\prod_{\ell\in
    L} t_\ell}}\,d\bft.
  \end{multline}
\end{lemma}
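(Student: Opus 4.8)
The statement is a multidimensional Fourier-inversion estimate: the left-hand side is a signed combination (indexed by partitions of $L$) of products of distribution functions of the smoothed variables $\bfX_J+\bfQ_J$, and we want to bound it by an integral of the corresponding combination of characteristic functions. Since $\bfQ$ has a compactly supported characteristic function (vanishing outside $[-T,T]^m$ by Lemma~\ref{lemma:Q}) and $F_{\bfX+\bfQ}$ is smooth with integrable density, the idea is to express the quantity inside the supremum as a Fourier integral and then take absolute values inside. First I would observe that for each part $J$ the product $\prod_{J\in\alpha}F_{\bfX_J+\bfQ_J}$ is exactly the distribution function of the vector obtained by stacking independent blocks $\bfX_J+\bfQ_J$; its characteristic function factors as $\prod_{J\in\alpha}\varphi_{\bfX_J}\cdot\varphi_{\bfQ_J}$. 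The combination $\sum_{\alpha}\mu_\alpha\prod_{J\in\alpha}(\cdots)$ applied to the characteristic functions is precisely $\Lambda(\varphi_\bfX\cdot\varphi_\bfQ)$ by Definition~\ref{definition:Lambda-K} (recall $\psi_{J,L}$ zeroes out the coordinates outside $J$, which is what "restrict to the block $J$" means for a characteristic function); and since $\varphi_\bfQ$ is itself a product over coordinates, $\Lambda(\varphi_\bfX\cdot\varphi_\bfQ)(\bft)=\Lambda(\varphi_\bfX)(\bft)\,\varphi_\bfQ(\bft)$ — here it is essential that $\varphi_\bfQ$ splits coordinatewise so that it survives the $\psi_{J,L}$ restrictions multiplicatively. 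Likewise for $\bfY$.

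Next I would set up the inversion formula. Let $G(\bfz)=\sum_{\alpha\in\Pi_L}\mu_\alpha\big(\prod_{J\in\alpha}F_{\bfX_J+\bfQ_J}-\prod_{J\in\alpha}F_{\bfY_J+\bfQ_J}\big)(\bfz)$. Each term is a distribution function of an $m$-dimensional random variable with a bounded, integrable density (the $\bfQ$-smoothing guarantees this), so $G$ is continuously differentiable and $\partial^m G/\partial z_1\cdots\partial z_m$ is an integrable function whose Fourier transform is $\big(\prod_{\ell}(it_\ell)\big)\cdot\big(\Lambda(\varphi_\bfX)(\bft)-\Lambda(\varphi_\bfY)(\bft)\big)\varphi_\bfQ(\bft)$ up to normalization — the key point being that the partition structure of $\Lambda$ ensures every coordinate $t_\ell$ appears in every summand, so the mixed partial hits a genuine function (no leftover "lower-dimensional" pieces that would only be measures). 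I would then recover $G$ from this mixed density by integrating in all $m$ variables from $-\infty$, apply the standard $m$-dimensional Fourier inversion, bound $|G(\bfz)|$ uniformly by moving the absolute value inside the integral, use $|\varphi_\bfQ(\bft)|\le 1$, and restrict the domain of integration to $\norm{\bft}\le T$ because $\varphi_\bfQ$ vanishes outside that box. The factor $\prod_\ell |t_\ell|$ in the denominator comes from dividing by $\prod_\ell (it_\ell)$, and $(2\pi)^{-m}$ is the inversion normalization. This follows Sadikova's two-dimensional argument, now organized through the operator $\Lambda$.

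The main obstacle — and the step requiring care rather than routine calculation — is justifying that $G$ (more precisely each product of distribution functions over a partition $\alpha$) is differentiable enough and that its top mixed partial derivative is an $L^1$ function with the claimed Fourier transform, so that the inversion formula applies termwise and the integral on the right is finite. One must check that $F_{\bfX_J+\bfQ_J}$ has a bounded density (immediate from $f_\bfQ$ bounded and integrable), that a product over the parts of $\alpha$ of such functions has an integrable mixed partial (a Fubini/tensor argument since the blocks are on disjoint coordinates), and that passing the partial derivatives through the partition sum produces exactly $\Lambda(\varphi_\bfX-\text{analogue})\cdot\varphi_\bfQ$ divided through by $\prod_\ell(it_\ell)$ with no boundary terms — this is where Lemma~\ref{lemma:Lambda-property} morally guarantees the combination is "divisible by $\prod_\ell t_\ell$" so that the integrand is genuinely a function near the origin. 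The finiteness claim at the tail follows from $\varphi_\bfQ$ having compact support, so the only real work is the regularity bookkeeping near $\bfzero$.
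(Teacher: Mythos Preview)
Your strategy matches the paper's: express the partition-sum of smoothed distribution functions via Fourier/L\'evy inversion, pull out $\varphi_\bfQ$ using its coordinatewise product structure (your identity $\Lambda(\varphi_\bfX\varphi_\bfQ)=\Lambda(\varphi_\bfX)\,\varphi_\bfQ$ is exactly what the paper uses, phrased there as $\varphi_\bfQ(\bft)=\prod_{J\in\alpha}\varphi_{\bfQ_J}(\bft_J)$), then take absolute values and invoke the compact support of $\varphi_\bfQ$. Two corrections are needed.

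First, the characteristic function of $g:=\partial^m G/\partial z_1\cdots\partial z_m$ is $(\Lambda(\varphi_\bfX)-\Lambda(\varphi_\bfY))\varphi_\bfQ$ \emph{without} the factor $\prod_\ell(it_\ell)$: each summand $\prod_{J\in\alpha}F_{\bfX_J+\bfQ_J}$ is a genuine $m$-dimensional distribution function, its top mixed partial is the corresponding density, and the characteristic function of a density carries no derivative prefactor. The $\prod_\ell t_\ell$ in the denominator arises only when you integrate $g$ back over $(-\infty,\bfz]$, not from differentiating $G$.

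Second, that integration step is where a justification is actually missing, and it is the more delicate point---not the regularity near $\bfzero$. You cannot interchange $\int_{(-\infty,\bfz]}d\bfw$ with the Fourier inversion of $g$, because $\int_{-\infty}^{z_\ell}e^{-it_\ell w_\ell}\,dw_\ell$ diverges. The paper circumvents this by starting from L\'evy's inversion formula for the \emph{box} probabilities $\P(\bfa_J\le\bfX_J+\bfQ_J\le\bfz_J)$, which produces the bounded kernels $(e^{-it_jz_j}-e^{-it_ja_j})/(-it_j)$; after assembling the $\Lambda$-sum and forming the $\bfX$--$\bfY$ difference, it lets $\bfa\to-\infty$ via the Riemann--Lebesgue lemma. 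That limit is legitimate precisely because $(\Lambda(\varphi_\bfX)-\Lambda(\varphi_\bfY))\varphi_\bfQ/\prod_\ell t_\ell$ is integrable on $[-T,T]^m$, which in turn uses Lemma~\ref{lemma:Lambda-property} near $\bft=\bfzero$ as you say. So your identification of the role of Lemma~\ref{lemma:Lambda-property} is correct, but it feeds into the Riemann--Lebesgue step rather than into a differentiability check for $g$.
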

\begin{proof}
  Let $\bfa$, $\bfz\in\R^m$ with $\bfa\le \bfz$.

  The random variable $\bfX_J+\bfQ_J$ admits a
  density function, because $\bfQ_J$ admits a density function. In particular,
  $\bfX_J+\bfQ_J$ is a continuous random variable. By L\'evy's theorem
  (see, e.g., \cite[Thm.~1.8.4]{Ushakov:1999:selec-topic-charac-funct}),
  \begin{equation*}
    \P(\bfa_J \le \bfX_J+\bfQ_J\le
    \bfz_J)=\frac1{(2\pi)^{\abs{J}}}\lim_{\substack{T_j\to\infty\\j\in
        J}}\int_{\substack{-T_j\le t_j\le T_j\\ j\in J}} \varphi_{\bfX_J+\bfQ_J}(\bft_J)\prod_{j\in
      J}\frac{e^{-it_jz_j}-e^{-it_ja_j}}{-it_j}\, d\bft_J.
  \end{equation*}
  As
  $\varphi_{\bfX_J+\bfQ_J}(\bft_J)=\varphi_{\bfX_J}(\bft_J)\varphi_{\bfQ_J}(\bft_J)$
  and $\varphi_{\bfQ_J}(\bft_J)$ vanishes outside $[-T, T]^J$ by
  Lemma~\ref{lemma:Q}, we can replace the limit $T_j\to\infty$ by setting
  $T_j=T$, i.e.,
  \begin{equation*}
    \P(\bfa_J \le \bfX_J+\bfQ_J\le
    \bfz_J)=\frac{i^{\abs{J}}}{(2\pi)^{\abs{J}}}\int_{\norm{\bft_J}\le T} \varphi_{\bfX_J}(\bft_J)\varphi_{\bfQ_J}(\bft_J)\prod_{j\in
      J}\frac{e^{-it_jz_j}-e^{-it_ja_j}}{t_j}\, d\bft_J.
  \end{equation*}
  Taking the product over all $J\in\alpha$ and summing over $\alpha\in\Pi_L$
  yields
  \begin{multline}\label{eq:Levy-long-1}
    \sum_{\alpha\in\Pi_L}\mu_\alpha\prod_{J\in\alpha}\P(\bfa_J \le \bfX_J+\bfQ_J\le
    \bfz_J) \\
    = \frac{i^m}{(2\pi)^m}\int_{\norm{\bft}\le T} \varphi_{\bfQ}(\bft) \prod_{\ell\in
      L}\frac{e^{-it_\ell z_\ell}-e^{-it_\ell a_\ell}}{t_\ell}\sum_{\alpha\in
      \Pi_L}\mu_\alpha\prod_{J\in\alpha}\varphi_{\bfX_J}(\bft_J) \, d\bft
  \end{multline}
  where Fubini's theorem and the fact that
  $\varphi_\bfQ(\bft)=\prod_{J\in\alpha}\varphi_{\bfQ_J}(\bft_J)$ have been
  used. By definition of $\varphi_{\bfX}$, we have
  $\varphi_{\bfX_J}(\bft_J)=\varphi_{\bfX}(\psi_{J, L}(\bft))$. Therefore, we
  can use the definition of $\Lambda(\varphi_\bfX)$ to rewrite
  \eqref{eq:Levy-long-1} to
  \begin{equation*}
    \sum_{\alpha\in\Pi_L}\mu_\alpha\prod_{J\in\alpha}\P(\bfa_J \le \bfX_J+\bfQ_J\le
    \bfz_J)
    = \frac{i^m}{(2\pi)^m}\int_{\norm{\bft}\le T} \frac{\Lambda(\varphi_\bfX)(\bft)}{\prod_{\ell\in L}t_\ell}\varphi_{\bfQ}(\bft) \prod_{\ell\in
      L}(e^{-it_\ell z_\ell}-e^{-it_\ell
        a_\ell}) \, d\bft.
  \end{equation*}
  This equation remains valid when replacing $\bfX$ by $\bfY$; taking the
  difference results in
  \begin{multline}\label{eq:Levy-long-X-Y}
    \sum_{\alpha\in\Pi_L}\mu_\alpha\biggl(\prod_{J\in\alpha}\P(\bfa_J \le \bfX_J+\bfQ_J\le
    \bfz_J)-
\prod_{J\in\alpha}\P(\bfa_J \le \bfY_J+\bfQ_J\le
    \bfz_J)\biggr)\\
    = \frac{i^m}{(2\pi)^m}\int_{\norm{\bft}\le T} \frac{\Lambda(\varphi_\bfX)(\bft)-\Lambda(\varphi_\bfY)(\bft)}{\prod_{\ell\in L}t_\ell}\varphi_{\bfQ}(\bft) \prod_{\ell\in
      L}(e^{-it_\ell z_\ell}-e^{-it_\ell
        a_\ell}) \, d\bft.
  \end{multline}
  If the integral on the right hand side of \eqref{eq:distribution-sum-bound}
  is infinite, there is nothing to show. Thus we may assume that it is finite.
  This also implies that
  \begin{equation*}
    \frac{\Lambda(\varphi_\bfX)(\bft)-\Lambda(\varphi_\bfY)(\bft)}{\prod_{\ell\in L}t_\ell}\varphi_{\bfQ}(\bft)
  \end{equation*}
  is an integrable function on $\R^m$ (as it vanishes outside $[-T,
  T]^m$). Then by the Riemann--Lebesgue lemma,
  we may take the limit $a_\ell\to-\infty$ for all $\ell\in L$ in
  \eqref{eq:Levy-long-X-Y} to obtain
  \begin{multline*}
    \sum_{\alpha\in\Pi_L}\mu_\alpha\biggl(\prod_{J\in\alpha}\P(\bfX_J+\bfQ_J\le
    \bfz_J)-
\prod_{J\in\alpha}\P(\bfY_J+\bfQ_J\le
    \bfz_J)\biggr)\\
    = \frac{i^m}{(2\pi)^m}\int_{\norm{\bft}\le T} \frac{\Lambda(\varphi_\bfX)(\bft)-\Lambda(\varphi_\bfY)(\bft)}{\prod_{\ell\in L}t_\ell}\varphi_{\bfQ}(\bft) e^{-i\ip{\bft}{\bfz}} \, d\bft.
  \end{multline*}
  Taking absolute values and rewriting the left hand side in terms of marginal
  distribution functions yields \eqref{eq:distribution-sum-bound}.
\end{proof}

We now bound the contribution of the lower dimensional distributions.

\begin{lemma}\label{lemma:lower-dimension}
  We have
  \begin{equation*}
    \sup_{\bfz\in\R^m}\abs[\bigg]{\sum_{\substack{\alpha\in\Pi_L\\\alpha\neq \{L\}}}\mu_\alpha\biggl(\prod_{J\in\alpha}
    F_{\bfX_J+\bfQ_J}-\prod_{J\in\alpha}F_{\bfY_J+\bfQ_J}\biggr)(\bfz)}
  \le
  \sum_{\emptyset\neq J\subsetneq
    L}B_{m-\abs{J}}\sup_{\bfz\in\R^J}\abs[\big]{F_{\bfX_{J}}(\bfz)-F_{\bfY_{J}}(\bfz)}
  .
  \end{equation*}
\end{lemma}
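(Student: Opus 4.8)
The plan is to bound the left-hand side termwise over the partitions $\alpha\neq\{L\}$, replacing each product of distribution functions by the corresponding lower-dimensional quantities. The key observation is that for $\alpha\neq\{L\}$, every part $J\in\alpha$ satisfies $\emptyset\neq J\subsetneq L$, so each factor $F_{\bfX_J+\bfQ_J}$ involves a proper marginal. First I would use a telescoping identity: for two tuples of functions $(g_J)_{J\in\alpha}$ and $(h_J)_{J\in\alpha}$ with values in $[0,1]$ (here $g_J=F_{\bfX_J+\bfQ_J}(\bfz_J)$ and $h_J=F_{\bfY_J+\bfQ_J}(\bfz_J)$), one has
\begin{equation*}
  \abs[\bigg]{\prod_{J\in\alpha}g_J-\prod_{J\in\alpha}h_J}\le\sum_{J\in\alpha}\abs{g_J-h_J},
\end{equation*}
since the difference of products telescopes and each partial product of values in $[0,1]$ is bounded by $1$. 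Applying this and the triangle inequality over $\alpha$, the left-hand side is at most
\begin{equation*}
  \sum_{\substack{\alpha\in\Pi_L\\\alpha\neq\{L\}}}\abs{\mu_\alpha}\sum_{J\in\alpha}\sup_{\bfz_J\in\R^J}\abs[\big]{F_{\bfX_J+\bfQ_J}(\bfz_J)-F_{\bfY_J+\bfQ_J}(\bfz_J)}.
\end{equation*}

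Next I would pass from $\bfX_J+\bfQ_J$ back to $\bfX_J$: since $\bfX_J+\bfQ_J$ is the convolution of $\bfX_J$ with the density of $\bfQ_J$, exactly as in \eqref{eq:convolution}, we get
\begin{equation*}
  \sup_{\bfz_J}\abs[\big]{F_{\bfX_J+\bfQ_J}(\bfz_J)-F_{\bfY_J+\bfQ_J}(\bfz_J)}\le\sup_{\bfz_J}\abs[\big]{F_{\bfX_J}(\bfz_J)-F_{\bfY_J}(\bfz_J)}
\end{equation*}
(this is the left-hand inequality of Lemma~\ref{lemma:perturbation} applied in dimension $\abs{J}$, or simply the fact that a convolution with a probability measure is an averaging operator). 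So the bound becomes a sum over proper nonempty subsets $J\subsetneq L$ of $\sup_{\bfz_J}\abs{F_{\bfX_J}(\bfz_J)-F_{\bfY_J}(\bfz_J)}$, each weighted by the total coefficient
\begin{equation*}
  c_J:=\sum_{\substack{\alpha\in\Pi_L\\\alpha\neq\{L\},\ J\in\alpha}}\abs{\mu_\alpha}.
\end{equation*}

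The main work — and the main obstacle — is the combinatorial identification of $c_J$ with the Fubini number $B_{m-\abs{J}}$. I would argue as follows: a partition $\alpha$ of $L$ having $J$ as one of its parts is the same as a partition of $L\setminus J$ together with the extra part $J$; if $\alpha$ has this partition-of-$(L\setminus J)$ equal to $\beta$, then $\abs{\alpha}=\abs{\beta}+1$ and $\abs{\mu_\alpha}=(\abs{\beta})!=\abs{\beta}!$. Hence
\begin{equation*}
  c_J=\sum_{\beta\in\Pi_{L\setminus J}}\abs{\beta}!=\sum_{k=1}^{m-\abs{J}}\stirlingpartition{m-\abs{J}}{k}k!=B_{m-\abs{J}},
\end{equation*}
using that the number of partitions of an $(m-\abs{J})$-element set into exactly $k$ blocks is the Stirling number $\stirlingpartition{m-\abs{J}}{k}$ and that ordering the blocks in $k!$ ways recovers the Fubini number as defined in Theorem~\ref{theorem:Berry-Esseen-dimension-m}; note the constraint $\alpha\neq\{L\}$ is automatically satisfied once $J\subsetneq L$ is forced to be a part, and when $J=\emptyset$ is excluded as required. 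Substituting $c_J=B_{m-\abs{J}}$ gives exactly the claimed right-hand side, noting that the sup over $\bfz\in\R^J$ matches the notation $\sup_{\bfz_J\in\R^J}$ used in the theorem since the value of $F_{\bfX_J}$ only depends on $\bfz$ through its $J$-coordinates. I expect the telescoping step to be routine and the bookkeeping of which partitions are excluded (only $\{L\}$, and parts $J=\emptyset$ never occur) to be the one place where care is needed.
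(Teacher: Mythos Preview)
Your proof is correct and follows essentially the same approach as the paper: the telescoping bound on a difference of products, the reindexing over proper nonempty blocks $J$, the identification of the coefficient $c_J$ with the Fubini number via $\alpha=\{J\}\cup\beta$, and the passage from $\bfX_J+\bfQ_J$ to $\bfX_J$ via the left inequality of Lemma~\ref{lemma:perturbation} are exactly the steps the paper uses, only in a slightly different order.
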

\begin{proof}
  Let $\alpha=\{J_1,\ldots, J_r\}\in\Pi_L$. Then
  \begin{align*}
    \abs[\bigg]{\prod_{J\in\alpha} F_{\bfX_J+\bfQ_J}(\bfz_J) - \prod_{J\in\alpha}
    F_{\bfY_J+\bfQ_J}(\bfz_J)} &=
    \biggl\lvert\sum_{k=1}^r \biggl(\prod_{j=1}^k F_{\bfX_{J_j}+\bfQ_{J_j}}(\bfz_{J_j})\prod_{j={k+1}}^r
    F_{\bfY_{J_j}+\bfQ_{J_j}}(\bfz_{J_j}) \\
    &\qquad\qquad- \prod_{j=1}^{k-1} F_{\bfX_{J_j}+\bfQ_{J_j}}(\bfz_{J_j})\prod_{j={k}}^r
    F_{\bfY_{J_j}+\bfQ_{J_j}}(\bfz_{J_j})\biggr)\biggr\rvert\\
    &=
    \biggl\lvert\sum_{k=1}^r \prod_{j=1}^{k-1} F_{\bfX_{J_j}+\bfQ_{J_j}}(\bfz_{J_j})\prod_{j={k+1}}^r
    F_{\bfY_{J_j}+\bfQ_{J_j}}(\bfz_{J_j}) \\
    &\qquad\qquad\times
    \bigl(F_{\bfX_{J_k}+\bfQ_{J_k}}(\bfz_{J_j})-F_{\bfY_{J_k}+\bfQ_{J_k}}(\bfz_{J_j})\bigr)\biggr\rvert\\
    &\le \sum_{J\in\alpha}\abs[\big]{F_{\bfX_{J}+\bfQ_{J}}(\bfz_J)-F_{\bfY_{J}+\bfQ_{J}}(\bfz_J)}
  \end{align*}
  because the products over the distribution functions are bounded by $1$.

  Therefore,
  \begin{equation*}
    \abs[\bigg]{\sum_{\substack{\alpha\in\Pi_L\\\alpha\neq \{L\}}}\mu_\alpha\biggl(\prod_{J\in\alpha}
    F_{\bfX_J+\bfQ_J}-\prod_{J\in\alpha}F_{\bfY_J+\bfQ_J}\biggr)(\bfz)}
  \le
  \sum_{\emptyset\neq J\subsetneq
    L}\abs[\big]{F_{\bfX_{J}+\bfQ_{J}}(\bfz_J)-F_{\bfY_{J}+\bfQ_{J}}(\bfz_J)}
  \sum_{\substack{\alpha\in \Pi_{L}\\J\in\alpha}}\abs{\mu_\alpha}.
  \end{equation*}
  A partition $\alpha\in\Pi_L$ with $J\in \alpha$ can be uniquely written as
  $\alpha=\{J\}\cup \beta$ for a $\beta\in\Pi_{L\setminus J}$. Thus
  \begin{equation*}
    \sum_{\substack{\alpha\in
        \Pi_{L}\\J\in\alpha}}\abs{\mu_\alpha}=\sum_{\beta\in\Pi_{L\setminus J}}
    \abs{\beta}!=\sum_{k=1}^{m-\abs{J}}
    \stirlingpartition{m-\abs{J}}{k}k!=B_{m-\abs{J}}
  \end{equation*}
  because there are
  $\stirlingpartition{m-\abs{J}}{k}$ partitions of  $L\setminus J$ with $k$ parts.
  Using the left hand side of \eqref{eq:perturbation} yields the assertion
  (more precisely, of a version of the left hand side of
  \eqref{eq:perturbation} for marginal distributions).
\end{proof}

Now, we can complete the proof of the theorem.

\begin{proof}[Proof of Theorem~\ref{theorem:Berry-Esseen-dimension-m}]
  The estimate \eqref{eq:Berry-Esseen} follows from
  Lemma~\ref{lemma:perturbation} (more precisely, the right hand side of
  \eqref{eq:perturbation}), Lemma~\ref{le:distribution-sum-bound} and
  Lemma~\ref{lemma:lower-dimension}.

  If the expectation of $\bfX$ exists, $\varphi_{\bfX}$ is
  differentiable. Therefore, $\Lambda(\varphi_\bfX)$ is differentiable,
  too. By Lemma~\ref{lemma:Lambda-property}, $\Lambda(\varphi_\bfX)(\bft)$
  has a zero whenever one of the $t_\ell$, $\ell\in L$, vanishes. Thus
  \begin{equation*}
    \frac{\Lambda(\varphi_\bfX)(\bft)}{\prod_{\ell\in L}t_\ell}
  \end{equation*}
  is bounded around $\bfzero$ and therefore bounded on $[-T, T]^m$. The same
  holds for $\bfY$. Thus the integral on the right hand side of
  \eqref{eq:Berry-Esseen} converges.
\end{proof}

\section{Proof of the Quasi-Power Theorem}\label{sec:proof-quasi-power-theorem}
We may now prove the $m$-dimensional quasi-power theorem, Theorem~\ref{th:quasi-power-dD}.

Let $\bfmu_n=\phi_{n}\grad u(\bfzero) $ and $\Sigma=H_u(\bfzero)$. We define
the random vector $\bfX=\phi_{n}^{-1/2}(\bfOmega_n-\bfmu_n)$. For
simplicity, we ignore the dependence on $n$ in this and the following notations.

First, we establish bounds for the characteristic function of $\bfX$.

\begin{lemma}\label{lemma:characteristic-function-single-bound}
 For $\Sigma$ regular or singular, there exists an analytic function $V(\bfs)$
  which is analytic for  $\norm{\bfs}< \tau\sphi/2$ such that
  \begin{equation*}
    \varphi_{\bfX}(\bfs)=\exp\Bigl(-\frac12 \bfs^\top \Sigma \bfs+V(\bfs)\Bigr)
  \end{equation*}
  and
  \begin{equation}\label{eq:V-bound}
    V(\bfs)=O\Bigl(\frac{\norm{\bfs}^3+\norm{\bfs}}{\sphi}\Bigr)
  \end{equation}
  hold for all $\bfs\in\C^K$ with $\norm{\bfs}< \tau\sphi/2$.

  For $n\to\infty$, $\bfX$ converges in distribution to a normal distribution with mean $\bfzero$
  and variance-covariance matrix $\Sigma$. In particular, $\Sigma$ is positive (semi-)definite if it is regular
  (singular, respectively).
\end{lemma}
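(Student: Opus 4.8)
The plan is to express the characteristic function of $\bfX$ through the moment generating function $M_{n}$, substitute the asymptotic expansion~\eqref{eq:moment-asymp}, Taylor-expand $u$ at the origin to second order, and read off $V$ from the remaining terms. First I would record that, from the definition of $\bfX$, $\varphi_{\bfX}(\bfs)=M_{n}(i\sphii\bfs)\,e^{-i\sphii\ip{\bfmu_{n}}{\bfs}}$, valid for real $\bfs$ and -- since a moment generating function is analytic in the interior of its domain of finiteness, so $M_{n}$ is analytic on $\norm{\cdot}<\tau$ -- extending to all $\bfs\in\C^{m}$ with $\norm{\bfs}<\tau\sphi$. On this region I substitute $M_{n}=e^{W_{n}}(1+E_{n})$, where $E_{n}(\bfs):=M_{n}(\bfs)e^{-W_{n}(\bfs)}-1$; by~\eqref{eq:moment-asymp} the function $E_{n}$ is analytic on $\norm{\cdot}<\tau$ with $\sup_{\norm{\bfs}\le\tau}\abs{E_{n}(\bfs)}=O(\kappa_{n}^{-1})$, hence $\abs{E_{n}}<1/2$ there for $n$ large. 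Writing $R_{u}(\bfw):=u(\bfw)-u(\bfzero)-\ip{\grad u(\bfzero)}{\bfw}-\frac12\bfw^{\top}\Sigma\bfw$, which is analytic and vanishes to order at least three at $\bfzero$ (so $R_{u}(\bfw)=O(\norm{\bfw}^{3})$ for $\norm{\bfw}<\tau/2$), a short computation using $\bfmu_{n}=\phi_{n}\grad u(\bfzero)$ gives, for $\norm{\bfs}<\tau\sphi$,
\[
  W_{n}(i\sphii\bfs)-i\sphii\ip{\bfmu_{n}}{\bfs}=W_{n}(\bfzero)-\frac12\bfs^{\top}\Sigma\bfs+\phi_{n}R_{u}(i\sphii\bfs)+\bigl(v(i\sphii\bfs)-v(\bfzero)\bigr),
\]
because the linear-in-$\bfs$ part $i\sphi\ip{\grad u(\bfzero)}{\bfs}$ of $\phi_{n}u(i\sphii\bfs)$ cancels $i\sphii\ip{\bfmu_{n}}{\bfs}$ exactly, and the quadratic part produces $-\frac12\bfs^{\top}\Sigma\bfs$.

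Consequently
\begin{multline*}
  \varphi_{\bfX}(\bfs)=\bigl(e^{W_{n}(\bfzero)}(1+E_{n}(\bfzero))\bigr)\,e^{-\frac12\bfs^{\top}\Sigma\bfs}\\
  \times e^{\phi_{n}R_{u}(i\sphii\bfs)+v(i\sphii\bfs)-v(\bfzero)}\,\frac{1+E_{n}(i\sphii\bfs)}{1+E_{n}(\bfzero)},
\end{multline*}
and the crucial observation is that $e^{W_{n}(\bfzero)}(1+E_{n}(\bfzero))=M_{n}(\bfzero)=\E(1)=1$, so the a priori large factor $e^{W_{n}(\bfzero)}$ drops out. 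Setting $\tilde{E}_{n}(\bfw):=(1+E_{n}(\bfw))/(1+E_{n}(\bfzero))-1$, which is analytic on $\norm{\cdot}<\tau$ with $\tilde{E}_{n}(\bfzero)=0$ and $\sup\abs{\tilde{E}_{n}}=O(\kappa_{n}^{-1})$, and defining
\[
  V(\bfs):=\phi_{n}R_{u}(i\sphii\bfs)+\bigl(v(i\sphii\bfs)-v(\bfzero)\bigr)+\log\bigl(1+\tilde{E}_{n}(i\sphii\bfs)\bigr),
\]
I obtain the asserted representation $\varphi_{\bfX}(\bfs)=\exp\bigl(-\frac12\bfs^{\top}\Sigma\bfs+V(\bfs)\bigr)$.

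It then remains to bound $V$ and verify analyticity. The first summand is $\phi_{n}\cdot O(\norm{\sphii\bfs}^{3})=O(\norm{\bfs}^{3}/\sphi)$ and is defined for $\norm{\bfs}<\tau\sphi/2$ -- which is where the factor $\frac12$ in the statement originates; the second is $O(\norm{\bfs}/\sphi)$ because $v-v(\bfzero)$ vanishes at $\bfzero$. For the third I would invoke the improved estimate $\tilde{E}_{n}(\bfw)=O(\norm{\bfw}/\kappa_{n})$ on $\norm{\bfw}<\tau$: since $\tilde{E}_{n}$ is analytic there, vanishes at the origin and has size $O(\kappa_{n}^{-1})$, restricting to a complex line $\zeta\mapsto\tilde{E}_{n}(\zeta\bfw/\norm{\bfw})$ and applying the Schwarz lemma yields exactly this. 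Hence $\abs{\tilde{E}_{n}(i\sphii\bfs)}=O(\norm{\bfs}/(\sphi\,\kappa_{n}))<1/2$ on $\norm{\bfs}<\tau\sphi/2$ for $n$ large, so the logarithm is well defined and analytic there and equals $O(\norm{\bfs}/\sphi)$. Adding up, $V$ is analytic on $\norm{\bfs}<\tau\sphi/2$ and satisfies~\eqref{eq:V-bound}. I expect this improved error estimate -- in place of the crude $E_{n}=O(\kappa_{n}^{-1})$ -- to be the one genuinely delicate point; it is exactly what removes the $O(\kappa_{n}^{-1})$ summand from the conclusion of Theorem~\ref{th:quasi-power-dD}, the rest being routine manipulation of Taylor expansions.

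For the convergence statement I would fix $\bfs\in\R^{m}$; for $n$ large enough that $\norm{\bfs}<\tau\sphi/2$, the representation above together with $V(\bfs)=O((\norm{\bfs}^{3}+\norm{\bfs})/\sphi)\to0$ gives $\varphi_{\bfX}(\bfs)\to\exp(-\frac12\bfs^{\top}\Sigma\bfs)$ as $n\to\infty$. Since $\abs{\varphi_{\bfX}(\bfs)}\le1$ for every $n$, the limit has modulus at most $1$, i.e.\ $\bfs^{\top}\Sigma\bfs\ge0$; as $\Sigma=H_{u}(\bfzero)$ is real and symmetric it is positive semi-definite, and positive definite precisely when it is regular. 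Thus $\exp(-\frac12\bfs^{\top}\Sigma\bfs)$ is the characteristic function of the (possibly degenerate) normal distribution with mean $\bfzero$ and covariance matrix $\Sigma$, and being continuous at $\bfzero$, L\'evy's continuity theorem yields that $\bfX$ converges in distribution to it.
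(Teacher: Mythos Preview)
Your proof is correct and follows essentially the same strategy as the paper's: express $\varphi_{\bfX}$ through $M_{n}$, cancel the linear and quadratic Taylor terms of $u$, and upgrade the error $E_{n}=O(\kappa_{n}^{-1})$ to $O(\norm{\bfs}/\kappa_{n})$ using analyticity together with the vanishing at the origin. The only variation is cosmetic: the paper normalises $u(\bfzero)=v(\bfzero)=0$ and obtains the improved error bound via Cauchy's integral formula for the gradient followed by a line integral, whereas you keep the constants (eliminating them via $M_{n}(\bfzero)=1$ and your $\tilde E_{n}$) and invoke the Schwarz lemma on complex lines---both routes yield the same estimate.
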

\begin{proof}
  By replacing $u(\bfs)$ and $v(\bfs)$ by $u(\bfs)-u(\bfzero)$ and $v(\bfs)-v(\bfzero)$,
  respectively, we may assume that $u(\bfzero)=v(\bfzero)=0$.
  We define $E(\bfs)$ by the relation $M_n(\bfs)=e^{W_n(\bfs)}(1+E(\bfs))$
  and note that by assumption, $E(\bfs)=O(\kappa_n^{-1})$ uniformly for
  $\|\bfs\|\le\tau$. We note that this implies
  $E(\bfzero)=0$.

  By assumption, $M_n(\bfs)$ exists for $\norm{\bfs}\le \tau$. Therefore, it is
  continuous for these $\bfs$ and, by Morera's theorem combined with
  applications of Fubini's and Cauchy's theorems, $M_n(\bfs)$ is analytic for
  $\norm{\bfs}\le\tau$. This also implies that
  $E(\bfs)$ is analytic for $\norm{\bfs}\le\tau$. By
  Cauchy's formula, we have
  \begin{equation*}
    \frac{\partial E(\bfs)}{\partial s_j}=\frac1{2\pi
      i}\oint_{\abs{\zeta_j}=\tau}\frac{E(s_1,\ldots, s_{j-1}, \zeta_j,
      s_{j+1}, \ldots, s_d)}{(\zeta_j-s_j)^2}\,d\zeta_j=O\Bigl(\frac{1}{\kappa_n}\Bigr)
  \end{equation*}
  for $\norm{\bfs}<\tau/2$. Thus
  \begin{equation*}
    E(\bfs)=\int_{[0, \bfs]}\ip{\grad E(\bft)}{d\bft}=O\Bigl(\frac{\norm{\bfs}}{\kappa_n}\Bigr)
  \end{equation*}
  for $\norm{s}<\tau/2$.

  We
  calculate that
  \begin{align*}
    \varphi_{\bfX}(\bfs)&=M_n\big(i\phi_{n}^{-1/2}\bfs\big)\exp\big(-i\phi_{n}^{-1/2}\ip{\bfmu_n}{\bfs}\big)\\
    &=\exp\Big(-\frac12 \bfs^\top \Sigma \bfs + V(\bfs)\Big)
  \end{align*}
  with
  \begin{equation*}
    V(\bfs)=u(i\phi_{n}^{-1/2}\bfs)\phi_{n}+v(i\phi_{n}^{-1/2}\bfs)-i\phi_{n}^{-1/2}\ip{\bfmu_n}{\bfs}
    + \frac12 \bfs^\top \Sigma \bfs + \log(1+E(i\phi_{n}^{-1/2}\bfs)).
  \end{equation*}

  Since $u(\bfzero)=v(\bfzero)=0$ and the first and second order
  terms of $u$ cancel out, we have
  \begin{align*}
    V(\bfs)=O\Bigl(\frac{\|\bfs\|^3+\|\bfs\|}{\sqrt{\phi_{n}}}\Bigr)
  \end{align*}
  for $\|\bfs\| <\tau\sphi/2$.

  Note that
  \begin{equation*}
    \lim_{n\to\infty} \varphi_\bfX(\bfs)=\exp\Big(-\frac12 \bfs^\top \Sigma \bfs \Big)
  \end{equation*}
  for $\bfs\in\C^m$,
  which implies that, in distribution, $\bfX$ converges  to the normal
  distribution with mean zero and variance-covariance matrix $\Sigma$. Although
  we have
  to refine our estimates for applying Theorem~\ref{theorem:Berry-Esseen-dimension-m},
  we immediately conclude that $\Sigma$ is positive (semi-)definite
  depending on whether it is regular or not.
\end{proof}

Let now $\Sigma$ be regular. By $\bfY$ we denote a
normally distributed random variable in $\R^m$ with
mean $\bfzero$ and variance-covariance matrix $\Sigma$. Its characteristic
function is
\begin{equation*}
  \varphi_\bfY(\bfs)=\exp\Big(-\frac12 \bfs^\top \Sigma \bfs \Big).
\end{equation*}
The smallest eigenvalue of $\Sigma$ is denoted by $\sigma>0$.

We are now able to bound the functions occurring in the Berry--Esseen inequality.

\begin{lemma}\label{lemma:characteristic-function-difference-bound}
  There exists a $c<\tau/2$ such that
  \begin{equation*}
    \abs{\Lambda(\varphi_\bfX)(\bfs)-\Lambda(\varphi_\bfY)(\bfs)}\le
    \exp\Bigl(-\frac{\sigma}{4}\norm{\bfs}^2 + O(\norm{\bfs})\Bigr)O\Bigl(\frac{\norm{\bfs}^3+\norm{\bfs}}{\sphi}\Bigr)
  \end{equation*}
  holds for all $\bfs\in\C^L$ with $\norm{\bfs}\le c\sphi$ and $\norm{\Im
    \bfs}\le 1$.
\end{lemma}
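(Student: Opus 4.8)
The plan is to expand $\Lambda(\varphi_\bfX)(\bfs)-\Lambda(\varphi_\bfY)(\bfs)$ term by term over the partitions $\alpha\in\Pi_L$ and compare, for each $\alpha$, the product $\prod_{J\in\alpha}\varphi_\bfX\circ\psi_{J,L}$ with $\prod_{J\in\alpha}\varphi_\bfY\circ\psi_{J,L}$. By Lemma~\ref{lemma:characteristic-function-single-bound}, for each part $J$ we have $\varphi_\bfX(\psi_{J,L}(\bfs))=\exp\bigl(-\tfrac12(\psi_{J,L}\bfs)^\top\Sigma(\psi_{J,L}\bfs)+V(\psi_{J,L}\bfs)\bigr)$ while $\varphi_\bfY(\psi_{J,L}(\bfs))=\exp\bigl(-\tfrac12(\psi_{J,L}\bfs)^\top\Sigma(\psi_{J,L}\bfs)\bigr)$, so each factor differs only by the exponential of $V$ evaluated at a projected argument. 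First I would record that $\sum_{J\in\alpha}(\psi_{J,L}\bfs)^\top\Sigma(\psi_{J,L}\bfs)$ has a uniform lower bound of the shape $\tfrac{\sigma}{2}\norm{\Re\bfs}^2-O(\norm{\Im\bfs}^2)$: this uses the positive-definiteness of $\Sigma$ (smallest eigenvalue $\sigma$), the fact that the whole-space quadratic form dominates (or equals, for the one-block partition) the sum of the block forms, and the condition $\norm{\Im\bfs}\le 1$ to absorb the imaginary cross-terms into an $O(\norm{\bfs})$ correction in the exponent. This yields the Gaussian prefactor $\exp(-\tfrac{\sigma}{4}\norm{\bfs}^2+O(\norm{\bfs}))$ claimed in the statement.

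Next I would handle the difference of the two products for a fixed $\alpha$. Writing $a_J=-\tfrac12(\psi_{J,L}\bfs)^\top\Sigma(\psi_{J,L}\bfs)$ and $v_J=V(\psi_{J,L}\bfs)$, we have $\prod_J e^{a_J+v_J}-\prod_J e^{a_J}=e^{\sum_J a_J}\bigl(e^{\sum_J v_J}-1\bigr)$, and by the telescoping/mean-value bound $\abs{e^{w}-1}\le\abs{w}e^{\abs{w}}$ together with $\abs{\sum_J v_J}\le\sum_J\abs{V(\psi_{J,L}\bfs)}$, the estimate $\abs{V(\psi_{J,L}\bfs)}=O\bigl((\norm{\psi_{J,L}\bfs}^3+\norm{\psi_{J,L}\bfs})/\sphi\bigr)=O\bigl((\norm{\bfs}^3+\norm{\bfs})/\sphi\bigr)$ from \eqref{eq:V-bound} gives a factor $O\bigl((\norm{\bfs}^3+\norm{\bfs})/\sphi\bigr)$ times $e^{O(\norm{\bfs}^3/\sphi+\norm{\bfs}/\sphi)}$. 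On the region $\norm{\bfs}\le c\sphi$ the exponent $O(\norm{\bfs}^3/\sphi)$ is $O(c^2\norm{\bfs}^2)$, so by choosing $c$ small enough (depending only on $\sigma$ and the implied constants, and with $c<\tau/2$) this parasitic exponential is absorbed, degrading the Gaussian constant from $\sigma/2$ only to $\sigma/4$; the linear term $O(\norm{\bfs}/\sphi)$ is harmlessly $O(1)$ and folds into $O(\norm{\bfs})$. Combining the Gaussian prefactor from $e^{\sum_J a_J}$ with this, each partition's contribution is bounded by $\exp\bigl(-\tfrac{\sigma}{4}\norm{\bfs}^2+O(\norm{\bfs})\bigr)\,O\bigl((\norm{\bfs}^3+\norm{\bfs})/\sphi\bigr)$; summing over the finitely many $\alpha\in\Pi_L$ (a constant depending only on $m$) preserves the bound, and the triangle inequality with the coefficients $\mu_\alpha$ finishes the argument.

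The constraint $c<\tau/2$ is needed so that $i\phi_n^{-1/2}\bfs$ lies in the domain $\norm{\,\cdot\,}<\tau\sphi/2$ where Lemma~\ref{lemma:characteristic-function-single-bound} applies; one checks $\norm{i\phi_n^{-1/2}\bfs}=\phi_n^{-1/2}\norm{\bfs}\le c<\tau/2<\tau\sphi/2$ for $n$ large (or for all $n$ if $\phi_n\ge1$), and the same for the projected arguments $\psi_{J,L}\bfs$ since projection does not increase the maximum norm. I expect the main obstacle to be bookkeeping the imaginary parts cleanly: one must verify that the lower bound $\sum_{J\in\alpha}(\psi_{J,L}\bfs)^\top\Sigma(\psi_{J,L}\bfs)\ge\tfrac{\sigma}{2}\norm{\bfs}^2-O(\norm{\bfs})$ really holds for complex $\bfs$ with $\norm{\Im\bfs}\le1$ and for \emph{every} partition, not just the trivial one. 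For the trivial partition $\{L\}$ this is just $\Re\bigl(\bfs^\top\Sigma\bfs\bigr)=\Re\bfs^\top\Sigma\Re\bfs-\Im\bfs^\top\Sigma\Im\bfs\ge\sigma\norm{\Re\bfs}^2-\norm{\Sigma}\,m\norm{\Im\bfs}^2$ and $\norm{\Re\bfs}^2\ge(\norm{\bfs}-\norm{\Im\bfs})^2\ge\norm{\bfs}^2-2\norm{\bfs}$; for a general $\alpha$ one applies the same inequality blockwise and uses that $\sum_{J\in\alpha}\norm{\Re\bfs_J}^2=\norm{\Re\bfs}^2$ while $\sum_{J\in\alpha}\norm{\Im\bfs_J}^2\le m$, so no partition is worse than the trivial one up to the universal constants already absorbed in the $O(\norm{\bfs})$ term.
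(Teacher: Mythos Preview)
Your plan is essentially the paper's own proof: for each $\alpha\in\Pi_L$ factor out the common Gaussian $\exp\bigl(-\tfrac12\sum_J\psi_{J,L}(\bfs)^\top\Sigma\,\psi_{J,L}(\bfs)\bigr)$, bound $\lvert e^{\sum_J V(\psi_{J,L}\bfs)}-1\rvert$ via $\lvert e^w-1\rvert\le\lvert w\rvert e^{\lvert w\rvert}$ and \eqref{eq:V-bound}, absorb the resulting $O(\norm{\bfs}^3/\sphi)$ in the exponent by taking $c$ small, and sum over the finitely many partitions. Two small corrections to your write-up: the statement that ``the whole-space quadratic form dominates the sum of the block forms'' is neither true in general nor the inequality you need---your final paragraph's blockwise argument is the right one and is exactly what the paper does; and the identity $\sum_{J\in\alpha}\norm{\Re\bfs_J}^2=\norm{\Re\bfs}^2$ holds for the Euclidean norm (which is what the eigenvalue bound $\bft^\top\Sigma\bft\ge\sigma\,\bft^\top\bft$ produces), after which one passes to the max norm via $\bft^\top\bft\ge\norm{\bft}^2$.
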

\begin{proof}
  Let $\alpha\in\Pi_L$. Then by
  Lemma~\ref{lemma:characteristic-function-single-bound}, we have
  \begin{multline}\label{eq:characteristic-function-multiple-bound-1}
    \abs[\bigg]{\prod_{J\in \alpha} (\varphi_\bfX\circ\psi_{J,L})(\bfs) - \prod_{J\in \alpha} (\varphi_\bfY\circ\psi_{J,L})(\bfs)}\\
    =\exp\biggl(-\frac12\Re\sum_{J\in \alpha} \psi_{J,L}(\bfs)^\top \Sigma
    \psi_{J,L}(\bfs) \biggr)
    \abs[\bigg]{\exp\biggl(\sum_{J\in\alpha} V(\psi_{J,L}(\bfs))\biggr)-1}.
  \end{multline}
  For $\bft\in\R^L$, we have $\bft^\top \Sigma \bft\ge \sigma \bft^\top\bft\ge
  \sigma \norm{\bft}^2$. For complex $w$, we have $|\exp(w)-1|\le
  |w|\exp(|w|)$.
  Splitting $\bfs$ into its real and imaginary parts in the first summand and
  using these inequalities for the first and second factor of
  \eqref{eq:characteristic-function-multiple-bound-1}, respectively, yields
  \begin{multline*}
    \abs[\bigg]{\prod_{J\in \alpha} (\varphi_\bfX\circ\psi_{J,L})(\bfs) - \prod_{J\in \alpha} (\varphi_\bfY\circ\psi_{J,L})(\bfs)}\\
    \le \exp\Bigl(-\frac{\sigma}2\norm{\bfs}^2 +
    O\Bigl(\norm{\bfs}+\frac{\norm{\bfs}^3+ \norm{\bfs}}{\sphi}\Bigr)\Bigr)
    O\Bigl(\frac{\norm{\bfs}^3+ \norm{\bfs}}{\sphi}\Bigr)
  \end{multline*}
  by~\eqref{eq:V-bound}. For sufficiently
  small $c$, we obtain
  \begin{equation*}
    \abs[\bigg]{\prod_{J\in \alpha} (\varphi_\bfX\circ\psi_{J,L})(\bfs) - \prod_{J\in \alpha} (\varphi_\bfY\circ\psi_{J,L})(\bfs)}
    \le \exp\Bigl(-\frac{\sigma}4\norm{\bfs}^2 +
    O(\norm{\bfs})\Bigr)
    O\Bigl(\frac{\norm{\bfs}^3+ \norm{\bfs}}{\sphi}\Bigr).
  \end{equation*}
  Multiplying by $\abs{\mu_\alpha}$ and summation over all $\alpha\in\Pi_L$
  concludes the proof of the lemma.
\end{proof}

The last ingredient to prove the quasi-power theorem is a bound of the
integrals occurring in the Berry--Esseen inequality.

\begin{lemma}\label{le:integral-bound}
  Let $c$ be as in
  Lemma~\ref{lemma:characteristic-function-difference-bound}.
  Then
  \begin{equation*}
    \int_{\norm{\bfs}\le c\sphi}\abs[\Big]{\frac{\Lambda(\varphi_{\bfX})(\bfs)-\Lambda(\varphi_\bfY)(\bfs)}{\prod_{\ell\in L}s_\ell}}\, d\bfs = O\Bigl(\frac{1}{\sphi}\Bigr).
  \end{equation*}
\end{lemma}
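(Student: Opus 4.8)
The plan is to bound the integrand near the origin and away from it separately, exploiting the two facts established in Lemma~\ref{lemma:characteristic-function-difference-bound} and in the proof of Theorem~\ref{theorem:Berry-Esseen-dimension-m}. First I would split the region of integration into $\norm{\bfs}\le R$ and $R<\norm{\bfs}\le c\sphi$ for a suitable fixed constant $R$ (independent of $n$). On the inner ball $\norm{\bfs}\le R$, the denominator $\prod_{\ell\in L}s_\ell$ causes trouble, but by Lemma~\ref{lemma:Lambda-property} both $\Lambda(\varphi_\bfX)$ and $\Lambda(\varphi_\bfY)$ vanish whenever any coordinate $s_\ell=0$, so the quotient $(\Lambda(\varphi_\bfX)(\bfs)-\Lambda(\varphi_\bfY)(\bfs))/\prod_\ell s_\ell$ extends analytically. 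To get the $\sphii$ saving, I would instead apply Lemma~\ref{lemma:characteristic-function-difference-bound} directly: on $\norm{\bfs}\le R$ it gives a bound of the form $O(\norm{\bfs}^3+\norm{\bfs})/\sphi$ times a bounded exponential factor, but I must still divide by $\prod_\ell s_\ell$. The clean way is to observe that $\Lambda(\varphi_\bfX)(\bfs)-\Lambda(\varphi_\bfY)(\bfs)$, being analytic and vanishing on every coordinate hyperplane, is divisible (as an analytic function) by $s_1\cdots s_m$; writing $g(\bfs)$ for the quotient, a Cauchy-estimate / maximum-principle argument on a polydisc of fixed radius bounds $\abs{g(\bfs)}$ on $\norm{\bfs}\le R$ by the sup of $\abs{\Lambda(\varphi_\bfX)-\Lambda(\varphi_\bfY)}$ on the slightly larger polydisc $\norm{\bfs}\le 2R$, which by Lemma~\ref{lemma:characteristic-function-difference-bound} is $O(1/\sphi)$. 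Integrating this $O(1/\sphi)$ bound over the bounded region $\norm{\bfs}\le R$ contributes $O(1/\sphi)$.

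On the outer shell $R<\norm{\bfs}\le c\sphi$, I would bound the numerator by Lemma~\ref{lemma:characteristic-function-difference-bound} and the denominator trivially from below. Here $\norm{\bfs}\ge R$ fixed, but the individual coordinates $s_\ell$ can still be small, so $\abs{\prod_\ell s_\ell}$ need not be bounded below; to handle this I would further decompose the shell according to which coordinates are ``large'' (say $\abs{s_\ell}>\delta$) and which are ``small''. On the piece where all coordinates satisfy $\abs{s_\ell}>\delta$, the denominator is $\ge\delta^m$ and the numerator bound $\exp(-\tfrac{\sigma}{4}\norm{\bfs}^2+O(\norm{\bfs}))\,O((\norm{\bfs}^3+\norm{\bfs})/\sphi)$ integrates over $\R^m$ (the Gaussian factor kills the polynomial growth) to $O(1/\sphi)$. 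On a piece where some coordinate, say $s_j$, is small, $\abs{s_j}\le\delta$, I would integrate out $s_j\in[-\delta,\delta]$ first: here I cannot divide by $s_j$ naively, so instead I use again that the numerator has a zero of order one in $s_j$ (Lemma~\ref{lemma:Lambda-property}), hence $\abs{\Lambda(\varphi_\bfX)(\bfs)-\Lambda(\varphi_\bfY)(\bfs)}\le \abs{s_j}\cdot\sup_{\abs{\zeta}\le 2\delta}\abs{\partial_{s_j}(\cdots)}$, cancelling the $s_j$ in the denominator; a Cauchy estimate controls that derivative in terms of the function on a nearby polydisc, where Lemma~\ref{lemma:characteristic-function-difference-bound} still applies because moving $s_j$ by at most $2\delta$ keeps $\norm{\bfs}$ of the same order and keeps us within the hypotheses $\norm{\bfs}\le c\sphi$, $\norm{\Im\bfs}\le 1$ (for $\delta<1/2$). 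This reduces the number of ``small'' coordinates by one, and iterating handles all such pieces, each contributing $O(1/\sphi)$.

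Collecting the finitely many pieces (their number depends only on $m$) gives the claimed $O(1/\sphi)$. I expect the main obstacle to be the bookkeeping around the coordinate hyperplanes: one must be careful that when applying Cauchy estimates to divide by the ``small'' coordinates, the enlarged polydiscs still lie inside the region $\norm{\bfs}\le c\sphi$, $\norm{\Im\bfs}\le 1$ where Lemma~\ref{lemma:characteristic-function-difference-bound} is valid, and that the exponential factor $\exp(-\tfrac{\sigma}{4}\norm{\bfs}^2+O(\norm{\bfs}))$ remains integrable after these perturbations — this is fine since shifting $\bfs$ by a bounded amount changes $\norm{\bfs}^2$ by $O(\norm{\bfs})+O(1)$, which is absorbed into the $O(\norm{\bfs})$ term. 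A cleaner alternative, which I would try first, is simply to note that $g(\bfs)=(\Lambda(\varphi_\bfX)(\bfs)-\Lambda(\varphi_\bfY)(\bfs))/\prod_\ell s_\ell$ is analytic on the whole polydisc $\norm{\bfs}\le c\sphi$ (with $\norm{\Im\bfs}\le 1$), and to bound $\abs{g(\bfs)}$ on the shell $R<\norm{\bfs}\le c\sphi$ by a single Cauchy-type estimate that trades a factor $\norm{\bfs}^{-m}$ against the bound on the numerator on a fixed-width enlargement, again yielding $\exp(-\tfrac{\sigma}{4}\norm{\bfs}^2+O(\norm{\bfs}))O(\norm{\bfs}^{-m}(\norm{\bfs}^3+\norm{\bfs})/\sphi)$, whose integral over $R<\norm{\bfs}$ is $O(1/\sphi)$; this avoids the case split entirely at the cost of a marginally larger implied constant.
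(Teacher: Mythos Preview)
Your main approach---decomposing the domain according to which coordinates are small ($|s_\ell|\le\delta$) versus large, using the vanishing of $\Lambda(\varphi_\bfX)-\Lambda(\varphi_\bfY)$ on coordinate hyperplanes together with Cauchy estimates to cancel the small $s_j$ from the denominator, and bounding $1/|s_k|\le 1/\delta$ for the large coordinates before integrating the Gaussian bound of Lemma~\ref{lemma:characteristic-function-difference-bound}---is exactly the paper's proof (there with threshold $\delta=1$, and the integral representation $h(\bfs)=\int_{[\bfzero,\bfs_J]}D^J h\,d\bfz_J$ in place of your ``zero of order one plus derivative bound'', which is the same thing); your preliminary inner/outer split is harmless but redundant, since the inner ball $\norm{\bfs}\le R$ is simply the piece where \emph{all} coordinates are small.

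The ``cleaner alternative'' at the end, however, does not work as stated: with a genuinely fixed-width contour $|\zeta_\ell-s_\ell|=1$ the circle passes through (or arbitrarily near) $\zeta_\ell=0$ whenever $|s_\ell|\approx 1$, so the kernel $1/(\zeta_\ell(\zeta_\ell-s_\ell))$ is not uniformly bounded and no factor $\norm{\bfs}^{-m}$ appears; and enlarging the contours to radius comparable to $|s_\ell|$ violates the hypothesis $\norm{\Im\bfs}\le 1$ of Lemma~\ref{lemma:characteristic-function-difference-bound}. Any fix forces you to treat small and large coordinates differently, which is precisely the decomposition you (and the paper) already carry out.
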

\begin{proof}
  For simplicity, set $h=\Lambda(\varphi_{\bfX})-\Lambda(\varphi_\bfY)$.
  For a partition $\{J,K\}$ of $L$, set
  \begin{equation*}
    \calS(J, K)=\{ \bfs\in\R^L\colon \abs{s_j}\le 1\text{ for }j\in J,\ 1\le
    \abs{s_k}\le c\sphi\text{ for }k\in K\}
  \end{equation*}
  and partition $\bfs$ into $(\bfs_{J},\bfs_{K})$.
  We use the notation
  \begin{equation*}
    D^{J} = \frac{\partial^{\abs{J}}}{\partial z_{j_1}\cdots \partial z_{j_{\abs{J}}}}
  \end{equation*}
  when $J=\{j_1,\ldots, j_{\abs{J}}\}$. The product of the paths from $0$ to
  $s_j$ for $j\in J$ is denoted by $[\bfzero, \bfs_{J}]$.

  By Lemma~\ref{lemma:Lambda-property}, we have
  \begin{equation}\label{eq:integral-bound-integral}
    h(\bfs)=\int_{[\bfzero,\bfs_{J}]}
    D^{J}(h(\bfz_{J}, \bfs_{K}))\,d\bfz_{J}.
  \end{equation}
  By Cauchy's integral formula, we have
  \begin{equation}\label{eq:integral-bound-2}
    D^{J}(h(\bfz_{J}, \bfs_{K})) =
    \frac1{(2\pi
      i)^{\abs{J}}}\oint_{\bfzeta_{J}}\frac{h(\bfzeta_{J},
      \bfs_{K})}{\prod_{j\in J}(\zeta_j-z_j)^2}\,d\bfzeta_{J}
  \end{equation}
  where  $\zeta_j$ is integrated over the circle of radius $1$ around $z_j$ for
  $j\in J$, thus $\norm{\Im \bfzeta_{J}}\le 1$.

  Using the estimate of Lemma~\ref{lemma:characteristic-function-difference-bound} yields
  \begin{equation}\label{eq:integral-bound-bound}
    \begin{aligned}
      \abs{h(\bfzeta_{J}, \bfs_{K})}
      &=\exp\Bigl(-\frac{\sigma}{4}\norm{(\bfzeta_{J},
        \bfs_{K})}^2+O(\norm{(\bfzeta_{J}, \bfs_{K})})\Bigr)\\
      &\qquad\times O\Bigl(\frac{\norm{(\bfzeta_{J},
          \bfs_{K})}^3+\norm{(\bfzeta_{J}, \bfs_{K})}}{\sphi}\Bigr)\\
      &=\exp\Bigl(-\frac{\sigma}4\norm{\bfs}^2+O(\norm{\bfs}+1)\Bigr) O\Bigl(\frac{\norm{\bfs}^3+1}{\sphi}\Bigr).
    \end{aligned}
  \end{equation}

  Combining \eqref{eq:integral-bound-integral}, \eqref{eq:integral-bound-2} and
  \eqref{eq:integral-bound-bound} leads to
  \begin{align*}
    &\int_{\calS(J, K)}\abs[\Big]{\frac{h(\bfs)}{\prod_{\ell\in L}s_\ell}}\, d\bfs
    \\&\qquad=O\biggl(\frac1{\sphi}\int_{\calS(J, K)}\frac{1}{\prod_{\ell\in
        L}\abs{s_\ell}}\\
    &\qquad\qquad\times\abs[\bigg]{\int_{[\bfzero, \bfs_{J}]}\exp\Bigl(-\frac{\sigma}{4}\norm{\bfs}^2+ O(\norm{\bfs}+1)\Bigr)(\norm{\bfs}^3+1)
    \, d\bfz_{J} }d\bfs\biggr).
    \\&\qquad=O\biggl(\frac1{\sphi}\int_{\calS(J, K)}\frac{1}{\prod_{\ell\in
        L}\abs{s_\ell}}\exp\Bigl(-\frac{\sigma}{4}\norm{\bfs}^2+ O(\norm{\bfs}+1)\Bigr)(\norm{\bfs}^3+1)\\
    &\qquad\qquad\times\abs[\bigg]{\int_{[\bfzero, \bfs_{J}]}
    \, d\bfz_{J} }d\bfs\biggr).
  \end{align*}
  The inner integral results in $\abs{\prod_{j\in J}s_j}$. The factors $\abs{s_k}\geq1$
  for $k\in K$ in the denominator can simply be omitted. If
  $K\neq \emptyset$, we still
  have to bound
  \begin{align*}
    &\int_{\calS(J, K)}\exp\Bigl(-\frac{\sigma}{4}\norm{\bfs}^2+
    O(\norm{\bfs}+1)\Bigr)(\norm{\bfs}^3+1)\,d\bfs\\
    &\qquad=\sum_{k\in K} \int_{\substack{\calS(J, K)\\\norm{\bfs}=\abs{s_k}}}
    \exp\Bigl(-\frac{\sigma}{4}\norm{\bfs}^2+
    O(\norm{\bfs}+1)\Bigr)(\norm{\bfs}^3+1)\,d\bfs\\
    &\qquad=\sum_{k\in K} \int_{\substack{\calS(J, K)\\\norm{\bfs}=\abs{s_k}}}
    \exp\Bigl(-\frac{\sigma}{4}\abs{s_k}^2+
    O(\abs{s_k}+1)\Bigr)(\abs{s_k}^3+1)\,d\bfs\\
    &\qquad=\sum_{k\in K} \int_{1\le \abs{s_k}\le c\sphi}
    \exp\Bigl(-\frac{\sigma}{4}\abs{s_k}^2+
    O(\abs{s_k})\Bigr)(\abs{s_k}^3+1)\\
    &\qquad\qquad\qquad\times\int_{\bfone\le\abs{\bfs_{K\setminus
          \{k\}}}\le \abs{s_k}\bfone} \int_{\abs{\bfs_J}\le \bfone} \,d\bfs_J
    d\bfs_{K\setminus \{k\}}ds_k\\
    &\qquad=2^{\abs{L}-1}\sum_{k\in K} \int_{1\le \abs{s_k}\le c\sphi}
    \exp\Bigl(-\frac{\sigma}{4}\abs{s_k}^2+
    O(\abs{s_k})\Bigr)(\abs{s_k}^3+1)\abs{s_k}^{\abs{K}-1}\,ds_k
  \end{align*}
  where the integration bounds are meant coordinate-wise.
  Then we use the fact that
  \begin{equation*}
    \int_{x\in\R} \exp\Bigl(-\frac{\sigma}{4} x^2\Bigr)\abs{x}^t\,dx
  \end{equation*}
  is finite for all constants $t\geq 0$. Thus, after completing the
  square in the argument of the exponential function, the integral over $s_k$ is
  bounded by a constant, i.e.,
  \begin{equation*}
    \int_{\calS(J, K)}\exp\Bigl(-\frac{\sigma}{4}\norm{\bfs}^2+
    O(\norm{\bfs}+1)\Bigr)(\norm{\bfs}^3+1)\,d\bfs =O(1).
  \end{equation*}
  We conclude that
  \begin{equation*}
    \int_{\calS(J, K)}\abs[\Big]{\frac{h(\bfs)}{\prod_{\ell\in L}s_\ell}}\, d\bfs=O\Bigl(\frac1{\sphi}\Bigr).
  \end{equation*}
  Summation over all partitions $\{J,K\}$ of $L$ completes the proof of the lemma.
\end{proof}

We now collect all results to prove Theorem~\ref{th:quasi-power-dD}.

\begin{proof}[Proof of Theorem~\ref{th:quasi-power-dD}]
We set $T=c\sphi$ with $c$ from
Lemma~\ref{lemma:characteristic-function-difference-bound}. By
Theorem~\ref{theorem:Berry-Esseen-dimension-m} and
Lemma~\ref{le:integral-bound}, we have
\begin{equation}\label{eq:quasi-power-one-step}
    \sup_{\bfz\in\R^m}\abs{F_{\bfX}(\bfz)-F_{\bfY}(\bfz)}=O\Bigl(\frac1{\sphi}\Bigr)+O\biggl(\sum_{\emptyset\neq
    J\subsetneq L} \sup_{\bfz_J\in\R^J}\abs[\big]{F_{\bfX_{J}}(\bfz_J)-F_{\bfY_{J}}(\bfz_J)}\biggr).
\end{equation}
For $\emptyset\neq J\subsetneq L$, we have
$\varphi_{\bfX_J}=\varphi_{\bfX}\circ \chi_{J, L}$. Therefore, all prerequisites
for applying the quasi-power theorem on $(\bfOmega_n)_J$ are fulfilled.
Therefore, we can apply \eqref{eq:quasi-power-one-step} recursively
and finally obtain
\begin{equation*}
   \sup_{\bfz\in\R^m}\abs{F_{\bfX}(\bfz)-F_{\bfY}(\bfz)}=O\Bigl(\frac1{\sphi}\Bigr).
 \end{equation*}
\end{proof}
Note that it would also have been possible to apply
Corollary~\ref{corollary:Berry-Esseen}; however, this would have required
proving Lemmas~\ref{lemma:characteristic-function-difference-bound} and~\ref{le:integral-bound} for subsets $K$ of $L$, which would have required
some notational overhead using $\chi_{K, L}$.

\begin{proof}[Proof of Proposition~\ref{proposition:moments}]
  This follows by the same arguments as in \cite[Thm.~2]{Hwang:1998}.
\end{proof}

\bibliographystyle{amsplainurl}
\bibliography{bib/cheub}
\end{document}

